\numberwithin{equation}{section}  \makeatletter\@addtoreset{equation}{section}
\newtheorem {theorem}{Theorem}[section]
 \newtheorem {lemma}[theorem]{Lemma}     
\newtheorem {corollary}[theorem]{Corollary}     \newtheorem {remark}[theorem]{Remark}   
       \newtheorem {proposition}[theorem]{Proposition}
\newcommand{\C}{\mathbb C}    \newcommand{\R}{\mathbb R}    	
\newcommand{\norm}[1]{\left\Vert#1\right\Vert}  \newcommand{\scal}[1]{\left<#1\right>}
\newcommand{\Hq}{\mathbb H}
\newcommand{\Sq}{\mathbb S}
\newcommand{\B}{\mathbb B}
\newcommand{\BR}{\mathbb{B}_{R}}
\newcommand{\BRI}{\mathbb{B}_{R,I}}
\newcommand{\alphaa}{\alpha}
\newcommand{\SHyperBTransR}{\mathcal{A}^\alphaa_{R,slice}}
\newcommand{\SHyperBKernelFctR}{A^\alphaa_{R,slice}}
\newcommand{\BergHol}{A_{hol}^{2,\alpha}(D,\C)}
\newcommand{\BergHolRI}{A^{2,\alpha}_{hol}(\BRI)}
\newcommand{\BergSliceR}{A^{2,\alpha}_{slice}(\BR )}
\newcommand{\HolKernelFctR}{A^\alphaa_{R,hol}}
\newcommand{\HolBTransR}{\mathcal{A}^\alpha_{R,hol}}
\newcommand{\HolBTrans}{\mathcal{A}^\alpha_{hol}}
\newcommand{\Kslice}{K^{\alphaa}_{R,slice}}
\newcommand{\Khol}{K^{\alphaa}_{R,hol}}
\begin{document}
 \dedicatory{ \textit{Dedicated to the memory of Kettani Ghanmi and Professor Ahmed Intissar}}

\title[The slice hyperholomorphic Bergman space on $\BR$]
{The slice hyperholomorphic Bergman space on $\BR$: Integral representation and asymptotic behavior}
\thanks{The research work of A.G. was partially supported by a grant from the Simons Foundation. 
}
\author{A. El Kachkouri}
\author{A. Ghanmi}
 \address{ \quad \newline
          Analysis and Spectral Geometry (A.G.S.), \newline
          Laboratory of Mathematical Analysis and Applications (L.A.M.A.) \newline
          Center of Mathematical Research and Applications of Rabat (CeReMAR), \newline
          Department of Mathematics, P.O. Box 1014,  Faculty of Sciences \newline
          Mohammed V University in Rabat, Morocco}
           \email{elkachkouri.abdelatif@gmail.com} \email{ag@fsr.ac.ma}

\subjclass[2010]{Primary 30G35} 
\keywords{Slice regular functions; Slice hyperholomorphic Bergman space;
 Second Bargmann transform; Laguerre polynomials; Asymptotic behavior;  Slice hyperholomorphic Bargmann-Fock space}

\begin{abstract}
The aim of the present paper is three folds. Firstly, we complete the study of the weighted hyperholomorphic Bergman space of the second kind on the ball of radius $R$ centred at the origin. The explicit expression of its Bergman kernel is given and can be written in terms of special hypergeometric functions of two non-commuting (quaternionic) variables. Secondly, we introduce and study some basic properties of an associated integral transform, the quaternionic analogue of the so-called second Bargmann transform for the holomorphic Bergman space.
Finally, we establish the asymptotic behavior as $R$ goes to infinity. We show in particular that the reproducing kernel of the weighted slice hyperholomorphic Bergman space gives rise to its analogue for the slice hyperholomorphic Bargamann-Fock space.
\end{abstract}
\maketitle

\section{\bf Introduction}

The classical Segal-Bargmann transform is well-known in the literature \cite{Bargmann1961,Folland1989,Zhu2012} and made the quantum mechanical configuration space $L^{2}(\R,\C)$ unitarily isomorphic to the phase space of all holomorphic $\C$-valued functions on the complex plane that are $e^{-\nu  |z|^2}dxdy$-square integrable. Added to this transform, V. Bargmann has introduced in the same paper \cite[p.203]{Bargmann1961} the integral operator \begin{align}\label{sBT}
\HolBTrans \varphi(z) = (1-z)^{-\alpha-1} \int_0^{+\infty}  \varphi(t) \exp \left( \frac{tz}{z-1}\right) \frac{t^{\alpha}e^{-t}}{\Gamma (\alpha+1)}   dt
\end{align}
mapping isometrically the Hilbert space
$$L^{2,\alpha}(\R^+,\C):=L^2\left(\R^+; \frac{t^{\alpha}e^{-t}}{\Gamma (\alpha+1)} dt\right) ; \quad \alpha>0,$$
onto the classical holomorphic Bergman space $\BergHol= Hol(D,\C) \cap L^{2,\alpha}( D, \C)$
consisting of all $\C$-valued holomorphic functions on the unit disk $D=\{z\in \C; \, |z|<1 \}$ that are square integrable with respect to the hyperbolic measure
\begin{align}\label{w3}
 d\lambda_\alpha(z) := (1-|z|^2)^{\alpha-1}\frac{\alpha}{\pi}   dxdy .
 \end{align}
The transform in \eqref{sBT} is realized as a coherent state transform associated to the lower hyperbolic Landau level of a special magnetic Schr\"odinger operator on the Poincar\'e disk \cite{EGIM2012}. In fact, the involved kernel function is related to the generating function of the Laguerre polynomials $L^{(\alpha)}_n$. A $q$-analogue of the weighted Bergman Hilbert space and the corresponding integral transform of Bargmann type in the setting of the $q$-analysis are considered in \cite{EssadiqGI2016}.

The theory of slice regularity initiated by Gentili and Struppa in \cite{GentiliStruppa07} extends in an appropriate way the holomorphic setting on $\C$ to $\Hq$-valued functions of one quaternoinic variable.
It was extensively studied and has found many interesting applications in operator theory, quantum physics and Schur analysis \cite{ColomboSabadiniStruppa2011,AlpayColomboSabadini2012,AlpayColomboSabadini2013,GentiliStoppatoStruppa2013,
AlpayBolotnikovColomboSabadini2016,AlpayColomboSabadini2016}.
In \cite{AlpayColomboSabadini2014}, Alpay {\it et al.} have considered the slice hyperholomorphic Bargmann-Fock space
  \begin{align}\label{SliceBargmann}
   \mathcal{F}^{2,\nu}_{slice}(\Hq) = \mathcal{SR}(\Hq) \cap L^{2,\nu}(\C_I,\Hq),
  \end{align}
where $\mathcal{SR}(\Hq)$ denotes the space of (left) slice regular $\Hq$-valued functions on quaternion and
$L^{2,\nu}(\C_I,\Hq)$, $\nu>0$, is the Hilbert space of $\Hq$-valued functions that are square integrable with respect to the Gaussian measure on an arbitrary slice $\C_I=\R+ \R I$. The corresponding Segal-Bargmann transform is considered in \cite{DG2017}
and maps isometrically the $L^2$-Hilbert space $L^{2}(\R,\Hq)$ onto the slice hyperholomorphic Bargmann-Fock space
  $ \mathcal{F}^{2,\nu}_{slice}(\Hq)$.
A quaternionic analogue of the Bergman theory in the setting of the slice regular functions on the open unit ball (centered at the origin) has been introduced by Colombo {\it et al.} in \cite{Colombo-Sabadini} (see also \cite{ColomboGonzalez-CervantesSabadini2014}).
Thus, the slice hyperholomorphic Bergman space $\BergSliceR$, for arbitrary radius, is defined to be
\begin{align}\label{SliceBergman}
\BergSliceR :=   \mathcal{SR}(\BR ) \cap L^{2,\alpha}(\BRI,\Hq) , \quad \alpha>0,
\end{align}
where $L^{2,\alpha}(\BRI,\Hq)$ is the $L^2$-Hilbert space on $\BRI=\BR \cap\mathbb{C}_{I}$ of $\Hq$-valued functions $f$ subject to the norm boundedness
\begin{align}\label{SP-slice}
\norm{ f}_{\alpha,slice}^2 =\int_{\BRI}\overline{f(z)}g(z)
 \left(1-\frac{|z|^2}{R^2}\right)^{\alpha-1} \frac{\alpha}{\pi R^2} dxdy
< +\infty.
\end{align}
Notice that the parameter $\alpha$ above corresponds to $\alpha+1$ in \cite{Colombo-Sabadini,ColomboGonzalez-CervantesSabadini2014} and the measure in \eqref{SP-slice} is the volume measure associated to the quaternionic hyperbolic geometry on $\BR$ induced from the scaled Poincar\'e-type differential metric $ ds^2_{\BR} =  R^4 (R ^2-|q|^2)^{-2} |d_Iq|^2   $.
The metric $ds^2_{\BR}$ (with $R=1$) was defined in \cite{BisiGentili2009} by developing a variation of an approach adopted by Ahlfors \cite{Ahlfors1988}.

 Motivated by these recent investigations in the theory of slice regularity and the geometrical fact that the euclidean limit of the balls $\BR$ (hyperbolic case) gives rise to the quaternionic space $\Hq$ (flat case), as the radius $R$ goes to $+\infty$,
 quite natural questions arise of whether the analogue of the transform $\HolBTrans$ can be constructed and wether the two theories on $ \mathcal{F}^{2,\nu}_{slice}(\Hq)$ and $\BergSliceR$ can be connected.
Our main purpose in the present paper is to answer these two questions.
Namely, we establish a quaternionic analogue of \eqref{sBT} for the slice hyperholomorphic Bergman space (Theorem \ref{MainThm1}). Moreover, related basic properties are studied and the explicit expression of its inverse is obtained (Theorem \ref{MainThm2}).
We also exhibit an orthonormal basis (Proposition \ref{prop:normBergman}) and give in Theorem \ref{thm:explicitWBK} the closed expression of its reproducing kernel generalizing the one obtained in \cite[Theorem 4.1-Proposition 4.3]{ColomboGonzalez-CervantesSabadini2014} for $\alpha=1$. We also provide an
integral representation (Theorem \ref{thm:Kernel-Kernel}) of this reproducing kernel.
This integral representation involves the kernel function of the quaternionic analogue of the second Bargmann transform for which we present in Theorem \ref{KerFctExplicit} a closed form of its explicit formula.
The study will be done on the quaternionic ball of radius $R$ centred at the origin so that the asymptotic behavior as $R$ goes to infinity can be discussed.
We show in Theorem \ref{MainThm3:limKernel} that the pointwise limit of the weighted Bergman kernel of the slice hyperholomorphic Bergman space $\BergSliceR$, for the specific $\alpha = \nu R^2$, is exactly the reproducing kernel of the slice hyperholomorphic Bargmann-Fock space $ \mathcal{F}^{2,\nu}_{slice}(\Hq)$. This is to say that one can move from the Bergman universe to the Bargmann universe by taking the "euclidean limit".

The following structure is adopted. To make the paper self-contained, we review in Section 2 some basic mathematical concepts relevant to slice regular functions. For more details, we refer the reader to \cite{ColomboSabadiniStruppa2011,GentiliStoppatoStruppa2013,CP} and the references therein.
In Section 3 we complete the study of basic properties of the slice hyperholomorphic Bergman space. Section 4 is devoted to the exact statements and the proofs of our main results concerning the quaternionic analogue of the second Bargmann transform. The last section discusses the asymptotic behavior, as $R$ goes to $+\infty$, of some elements in the slice heperholomorphic Bergman theory, like measure, basis and the reproducing kernel function. It will be seen that they give rise to their analogues in the setting of slice hyperbolic Bargmann-Fock space.

\section{\bf \textbf{Preliminaries }}

 Let $\Hq$ denote the quaternion algebra with its standard basis $\lbrace{1,i,j,k}\rbrace$ satisfying the Hamiltonian multiplication $i^2=j^2=k^2=ijk=-1$, $ij=-ji=k$, $jk=-kj=i$ and $ki=-ik=j$.  For $q\in{\Hq}$, we write $q=x_0+x_1i+x_2j+x_3k$ with $x_0,x_1,x_2,x_3\in{\R}$. With respect to the quaternionic conjugate defined to be $\overline{q}=x_0-x_1i-x_2j-x_3k=\Re(q)-\Im(q)$, we have
 $\overline{ pq }= \overline{q}\, \overline{p}$ for $p,q\in \Hq$. The modulus of $q$ is defined by $\vert{q}\vert=\sqrt{q\overline{q}}=(x_0^2+x_1^2+x_2^2+x_3^2)^{1/2}$.
 The unit sphere $S^2=\lbrace{q\in{\Im\Hq}; \vert{\Im(q)}\vert=1}\rbrace$ in $\Im\Hq$ can be identified with $\mathbb{S}=\lbrace{q\in{\Hq};q^2=-1}\rbrace$, the set of imaginary units.
  Moreover, any $q\in \Hq\setminus \R$ can be rewritten in a unique way as $q=x+I y$ for some real numbers $x$ and $y>0$, and imaginary unit $I\in \mathbb{S}$.
  Accordingly, $\Hq$ is the union of the so-called slices, $\C_I = \R+\R I$; $I\in{\mathbb{S}}$, that are complex planes in $\Hq$ (passing through $0$, $1$ and $I$) isomorphic to the complex plane $\C$.

The basic notion in this section is the slice (left) regularity (or hyperolomorphicity) of a function $f: \Omega \longrightarrow \Hq$ on a given domain $\Omega\subset \Hq$, provided that $f$ is a real differentiable function on $\Omega$ and its restriction $f_I$ is holomorphic on $\Omega_I := \Omega \cap \C_I$. That is, it has continuous partial derivatives with respect to $x$ and $y$ and the function
$\overline{\partial_I} f : \Omega_I \longrightarrow \Hq$ defined by
$$
\overline{\partial_I} f(x+Iy):=
\dfrac{1}{2}\left(\frac{\partial }{\partial x}+I\frac{\partial }{\partial y}\right)f_I(x+yI)
$$
vanishes identically on $\Omega_I$. The corresponding space, denoted $\mathcal{SR}(\Omega)$, is endowed with the natural uniform convergence on compact sets. It turns out that $\mathcal{SR}(\Omega)$ is a right vector space, over the noncommutative field $\Hq$.
For $\Omega$ being the open ball  $\Omega = B(0,R):= \{q\in \Hq; \, |q| < R\}$, a function $f$ is in $\mathcal{SR}(B(0,R))$ if and only if it admits a power series expansion (\cite{GentiliStruppa07,ColomboSabadiniStruppa2011,GentiliStoppatoStruppa2013})
\begin{align}\label{expansion}
f(q)=\sum_{n=0}^{+\infty} q^{n} a_n; \qquad a_n = \frac{1}{n!}\frac{\partial^{n}f}{\partial x^{n}}(0)
\end{align}
which converges absolutely and uniformly on every compact subset of $B(0,R)$.
Interesting results for the slice hyperholomorphic functions are stated in the context of the whole space $\Hq$ as well as of the Euclidean ball $\BR =B(0,R)$.
These two domains are special examples of the so-called axially symmetric slice domains.
A slice domain $\Omega\subset \Hq$ is such that $\Omega\cap{\R}\ne \emptyset$ and the set $\Omega_I:=\Omega\cap{\C_I}$ is a domain of the complex plane $\C_I$ for any arbitrary $I\in{\mathbb{S}}$, while the axial symmetry means that the whole sphere $x+y\mathbb{S}:=\lbrace{x+yJ; \, J\in{\mathbb{S}}}\rbrace$ is contained in $\Omega$ for every $q=x+yI\in{\Omega}$.

We conclude this section by recalling some fundamental results in the theory of slice regular functions that are of particular interest for us (see \cite{ColomboSabadiniStruppa2011,GentiliStoppatoStruppa2013} for details). The first one relates slice regularity to classical holomorphy. 

\begin{lemma}[Splitting lemma]\label{split} Let $f$ be a slice regular function on an open set $U$.  For every $I$ and $J$ two perpendicular imaginary units, there exist two holomorphic functions $F,G:U_I=U\cap{\C_I}\longrightarrow{\C_I}$ such that for all $z=x+yI\in U_I$, we have
$$f_I(z)=F(z)+G(z)J.$$
\end{lemma}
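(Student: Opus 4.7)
The plan is to exploit the decomposition of $\Hq$ induced by any two perpendicular imaginary units. Setting $K := IJ$, the quadruple $\{1,I,J,K\}$ forms an $\R$-basis of $\Hq$, and as a right $\C_I$-module one obtains the direct sum $\Hq = \C_I \oplus \C_I J$.

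First I would use this decomposition to define pointwise on $U_I$ two component functions $F,G:U_I\to\C_I$ by writing uniquely
$$f_I(z) = F(z) + G(z)\, J, \qquad z = x + yI \in U_I.$$
Real differentiability of $f_I$ transfers immediately to $F$ and $G$ since the projection onto each $\C_I$-summand is $\R$-linear.

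Next I would apply the operator $\overline{\partial_I}$ to both sides. Since $J$ is a constant quaternion and quaternionic multiplication is associative, $\overline{\partial_I}$ commutes with right multiplication by $J$, so
$$\overline{\partial_I} f_I = \overline{\partial_I} F + (\overline{\partial_I} G)\, J.$$
Because $\overline{\partial_I}$ involves only real partials together with left multiplication by $I$, and $\C_I$ is stable under left multiplication by $I$, the two summands on the right belong respectively to $\C_I$ and $\C_I J$. The slice regularity assumption $\overline{\partial_I} f_I \equiv 0$, combined with the uniqueness of the decomposition $\Hq = \C_I \oplus \C_I J$, then forces $\overline{\partial_I} F \equiv 0$ and $\overline{\partial_I} G \equiv 0$ on $U_I$.

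Finally, identifying $\C_I \cong \C$ via $x + yI \leftrightarrow x + iy$, the equations $\overline{\partial_I} F = \overline{\partial_I} G = 0$ are precisely the classical Cauchy-Riemann system, so $F$ and $G$ are holomorphic in the usual sense and the claimed splitting $f_I = F + GJ$ follows. The only delicate point is bookkeeping between left and right multiplication in the noncommutative setting; once the decomposition is taken on the correct side (as a right $\C_I$-module), the argument separates cleanly into two independent scalar Cauchy-Riemann problems and no further difficulty arises.
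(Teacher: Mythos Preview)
The paper does not actually prove this lemma; it is stated in the preliminaries section without proof and attributed to the references \cite{ColomboSabadiniStruppa2011,GentiliStoppatoStruppa2013}. Your argument is correct and is precisely the standard proof found in those sources: use the real-linear decomposition $\Hq=\C_I\oplus\C_I J$ to define $F$ and $G$, note that $\overline{\partial_I}$ (real partials plus \emph{left} multiplication by $I$) sends $\C_I$-valued functions to $\C_I$-valued functions and commutes with right multiplication by the constant $J$, and then read off the two independent Cauchy--Riemann systems from the direct sum.

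One small remark: calling the decomposition a ``right $\C_I$-module'' splitting is accurate but not actually used in your argument---only the $\R$-vector space direct sum matters. What drives the proof is that $\overline{\partial_I}$ involves \emph{left} multiplication by $I$, which preserves $\C_I$ and hence respects the splitting $\C_I\oplus\C_I J$.
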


The global behavior of a slice holomorphic function on axially symmetric set is completely determined by their behavior on a given slice. More precisely, we have

\begin{lemma}[Representation formula]\label{repform}
Let $\Omega$ be an axially symmetric slice domain and $f\in{\mathcal{SR}(\Omega)}$. Then, for any $I,J\in{\mathbb{S}}$ and every  $q=x+yJ\in{\Omega}$, we have
$$ f(x+yJ)= \frac{1}{2}(1-JI)f(x+yI)+ \frac{1}{2}(1+JI)f(x-yI).$$
\end{lemma}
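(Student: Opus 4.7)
The plan is to prove the identity via the identity principle on the complex plane $\C_J$. I would introduce the auxiliary function $\phi: \Omega_J \longrightarrow \Hq$ defined by
\[
\phi(x+yJ) := \tfrac{1}{2}(1-JI)\, f(x+yI) + \tfrac{1}{2}(1+JI)\, f(x-yI),
\]
which is well defined because $\Omega$ is axially symmetric, so the points $x\pm yI$ lie in $\Omega_I \subset \Omega$ whenever $x+yJ\in\Omega_J$. The claim then reduces to showing $\phi\equiv f$ on $\Omega_J$. The coincidence on the real axis is immediate: setting $y=0$ collapses the two coefficients to $\tfrac{1}{2}+\tfrac{1}{2}=1$, giving $\phi(x)=f(x)$ for every $x\in\Omega\cap\R$; the slice-domain hypothesis guarantees that $\Omega\cap\R$ is a non-empty open subset of $\R$, hence it has accumulation points inside the connected set $\Omega_J$.

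The key step is to verify that $\phi$ is slice regular with respect to $J$ on $\Omega_J$, i.e.\ that $(\partial_x + J\partial_y)\phi \equiv 0$. Using the slice power-series expansion of $f$ on each slice $\C_I$ and the fact that $I$ commutes with $(x\pm yI)^{n-1}$, one obtains the left-action Cauchy--Riemann identities $\partial_y f(x\pm yI) = \pm I\,\partial_x f(x\pm yI)$. Plugging these into the operator $\partial_x + J\partial_y$ applied to $\phi$ yields
\[
\tfrac{1}{2}\bigl[(1-JI) + J(1-JI)I\bigr]\partial_x f(x+yI) + \tfrac{1}{2}\bigl[(1+JI) - J(1+JI)I\bigr]\partial_x f(x-yI).
\]
Each bracketed coefficient vanishes: using $I^2 = J^2 = -1$, one computes $J(1-JI)I = JI - 1$ and $J(1+JI)I = JI + 1$, so that $(1-JI)+(JI-1)=0$ and $(1+JI)-(JI+1)=0$. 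Hence $\phi$ is slice regular on $\Omega_J$, and the identity principle on the connected open set $\Omega_J\subset\C_J$ then forces $\phi\equiv f$ there, which is the Representation Formula.

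The main obstacle is the non-commutative bookkeeping in the Cauchy--Riemann verification: since $f$ is $\Hq$-valued and the imaginary units $I$ and $J$ do not commute in general, one must carefully keep the units on the correct (left) side during differentiation and multiplication. Once the left-action relation $\partial_y f(x\pm yI)= \pm I\,\partial_x f(x\pm yI)$ is in place and the bracketed coefficients are simplified using $I^2=J^2=-1$, the algebraic collapse is a matter of a few lines; no deeper analytic tool beyond the identity principle on a complex slice is required.
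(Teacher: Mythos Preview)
The paper does not prove this lemma; it is recalled without proof from the references \cite{ColomboSabadiniStruppa2011,GentiliStoppatoStruppa2013} as background material in the preliminaries section. Your argument is essentially the standard proof found in those sources and is correct in outline.

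Two small points are worth tightening. First, you justify the left-action Cauchy--Riemann relation $\partial_y f(x\pm yI)=\pm I\,\partial_x f(x\pm yI)$ via a power-series expansion, but a slice regular function on a general axially symmetric slice domain need not admit a global power series centered at the origin (that characterization in \eqref{expansion} is specific to balls $B(0,R)$). The relation you need follows directly from the defining equation $(\partial_x + I\partial_y)f_I = 0$, where $I$ already multiplies on the left, so the power-series detour is unnecessary and slightly restrictive as stated. Second, the identity principle you invoke at the end is for $\Hq$-valued $J$-holomorphic functions on the connected open set $\Omega_J\subset\C_J$; this is not literally the classical identity principle for $\C$-valued holomorphic functions, but it follows from it immediately via the Splitting Lemma (Lemma~\ref{split}) applied to $\phi - f_J$. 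With these two clarifications your proof is complete.
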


\begin{lemma}[Extension Lemma]\label{extensionLem}
Let $I\in \mathbb{S}$ and $h:\Omega_I\longrightarrow \Hq$ be a holomorphic function on a symmetric domain $\Omega_I=\Omega\cap \C_I$ in $\C_I$ with respect to the real axis. Then, the function $ext(h)$ defined by
$$ext(h)(x+yJ):= \dfrac{1}{2}[h(x+yI)+h(x-yI)]+\frac{JI}{2}[h(x-yI)-h(x+yI)];  \quad J\in \mathbb{S},$$
extends $h$ to a slice regular function on the symmetric completion of $\Omega_I$ defined by $\overset{\sim}\Omega=\cup \{x+y\mathbb{S}; x+yJ\in{\Omega}\}$.
Moreover, $ext(h)$ is the unique slice regular extension of $h$.
\end{lemma}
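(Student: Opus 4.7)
The plan is to verify three things in order: the extension property $ext(h)\big|_{\C_I}=h$, slice regularity of $ext(h)$ on $\widetilde{\Omega}$, and uniqueness. The extension property is immediate: setting $J=I$ in the defining formula makes the second term equal to $-\tfrac12[h(x-yI)-h(x+yI)]$ (since $II=-1$), and its sum with the first term collapses to $h(x+yI)$. As consistency checks, $J=-I$ returns $h(x-yI)$ and $y=0$ returns $h(x)$, so $ext(h)$ is well-defined on each entire sphere $x+y\mathbb{S}\subset\widetilde{\Omega}$ and not merely on a chosen representative.

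For slice regularity, I would write $ext(h)(x+yJ) = A(x,y) + JI\,B(x,y)$ with
$$
A(x,y):=\tfrac{1}{2}\bigl[h(x+yI)+h(x-yI)\bigr], \qquad B(x,y):=\tfrac{1}{2}\bigl[h(x-yI)-h(x+yI)\bigr].
$$
The holomorphy of $h$ on $\Omega_I$ reads $(\partial_x+I\partial_y)h=0$; substituting $y\mapsto -y$ yields the companion identity $(\partial_x-I\partial_y)[h(x-yI)]=0$. Adding and subtracting these two identities gives the Cauchy--Riemann type pair
$$
\partial_y A = -I\,\partial_x B, \qquad \partial_y B = -I\,\partial_x A.
$$
Substituting into $(\partial_x + J\partial_y)(A + JI\,B)$ and using the single non-commutative identity $J\cdot JI = J^2 I = -I$, the four summands cancel in pairs and yield $\overline{\partial_J}\,ext(h)\equiv 0$ on $(\widetilde{\Omega})_J$ for every $J\in\mathbb{S}$. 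Together with the inherited real differentiability of $A$ and $B$ in $(x,y)$, this establishes slice regularity on $\widetilde{\Omega}$.

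Uniqueness follows at once from the Representation Formula (Lemma \ref{repform}): if $g\in\mathcal{SR}(\widetilde{\Omega})$ agrees with $h$ on $\Omega_I$, then for $q=x+yJ\in\widetilde{\Omega}$,
$$
g(x+yJ)=\tfrac{1}{2}(1-JI)h(x+yI)+\tfrac{1}{2}(1+JI)h(x-yI)=ext(h)(x+yJ),
$$
forcing $g=ext(h)$. The only genuinely delicate point is in the slice regularity step: since $h$ is $\Hq$-valued, the imaginary unit $I$ does not commute past the values of $h$ in general, so one must keep the factor $JI$ on a fixed side (here, the left) of $B$ throughout the computation, and the two Cauchy--Riemann identities above must be verified as $\Hq$-valued equalities. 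This is what forces the clean change-of-variable derivation of the companion identity rather than a naive holomorphy argument for a conjugate $\bar h$.
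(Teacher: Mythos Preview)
Your argument is correct. The extension check, the derivation of the Cauchy--Riemann pair $\partial_y A=-I\partial_x B$, $\partial_y B=-I\partial_x A$ from the holomorphy of $h$ and its companion identity, the cancellation in $(\partial_x+J\partial_y)(A+JIB)$ using $J\cdot JI=-I$, and the appeal to the Representation Formula for uniqueness are all sound; you were also right to flag that the left placement of $I$ and $JI$ must be maintained throughout because $h$ is $\Hq$-valued.

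There is, however, nothing to compare against in this paper: the Extension Lemma sits in the preliminaries section, where it is merely \emph{recalled} from the literature (the paper refers the reader to \cite{ColomboSabadiniStruppa2011,GentiliStoppatoStruppa2013} for details) and is stated without proof. Your write-up is essentially the standard proof one finds in those references. One minor point worth making explicit for the uniqueness step: the Representation Formula requires an axially symmetric slice domain, and $\widetilde{\Omega}$ is one because a connected open subset of $\C_I$ that is symmetric about the real axis necessarily meets $\R$ (otherwise the upper and lower parts would disconnect it), and each slice $\widetilde{\Omega}_J$ is the homeomorphic image of $\Omega_I$.
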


  \begin{lemma}[Identity principle]\label{IdentityPrinciple}
Let $f$ be a slice regular function on a slice domain $U$ and denote by $\mathcal{Z}_f$ its zero set.
If $\mathcal{Z}_f \cap \C_I$ has an accumulation point in $U_I$ for some $I\in \Sq$, then $f$ vanishes identically on $U$.
\end{lemma}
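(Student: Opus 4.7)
The plan is to reduce to the classical identity principle for one-variable holomorphic functions via the Splitting Lemma, and then to propagate the vanishing from $U_I$ to every other slice using $U \cap \R \neq \emptyset$ in a slice domain.

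First I would fix the slice $\C_I$ on which the accumulation hypothesis holds and choose some $J \in \Sq$ with $J \perp I$. The Splitting Lemma produces holomorphic functions $F, G : U_I \longrightarrow \C_I$ such that $f_I = F + GJ$. Since $1$ and $J$ are $\C_I$-linearly independent, the vanishing of $f_I$ on $\mathcal{Z}_f \cap \C_I$ forces $F$ and $G$ to vanish simultaneously on that set. As $U_I$ is a domain in $\C_I \cong \C$ and $\mathcal{Z}_f \cap \C_I$ has an accumulation point in $U_I$, the classical identity principle applied separately to $F$ and $G$ gives $F \equiv G \equiv 0$ on $U_I$, whence $f \equiv 0$ on $U_I$.

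Second, since $U$ is a slice domain, $U \cap \R$ is a nonempty open subset of $\R$ and therefore contains an open interval $\mathcal{I}$. By the first step, $f$ vanishes on $\mathcal{I} \subset U_I$. For any other $J' \in \Sq$, the set $U_{J'} := U \cap \C_{J'}$ is a domain of $\C_{J'}$ containing $\mathcal{I}$, and $f_{J'}$ is holomorphic there (again via the Splitting Lemma, applied with a unit perpendicular to $J'$). Every point of $\mathcal{I}$ is an accumulation point of the zero set of $f_{J'}$ inside $U_{J'}$, so the classical identity principle yields $f_{J'} \equiv 0$ on $U_{J'}$. Since every $q \in U$ belongs to some $U_{J'}$ (either $q \in \R$, hence in every slice, or $q = x + y J' \notin \R$ for a unique $J'$), we conclude $f \equiv 0$ on $U$.

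The only subtlety worth flagging is checking that the accumulation point in each application of the classical identity principle actually lies inside the relevant domain rather than merely in its closure: this is guaranteed by hypothesis in the first step, and in the second step it follows from $U \cap \R$ being open in $\R$ (so it contains a whole interval of interior accumulation points). There is no deeper obstacle---the argument is essentially the one-variable identity principle applied slice by slice and glued along the real axis.
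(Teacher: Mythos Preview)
Your argument is correct and is essentially the standard proof of the identity principle for slice regular functions as found in the references \cite{ColomboSabadiniStruppa2011,GentiliStoppatoStruppa2013}. Note, however, that the paper does not supply its own proof of this lemma: it is listed in the preliminaries section as a known background result and simply cited, so there is no in-paper proof to compare against.
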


\section{\bf The slice hyperholomorphic Bergman space of the second kind revised}

The definition of the slice hyperholomorphic Bergman space 
 on the open unit ball $\mathbb{B}$ (centered at the origin) was first presented in \cite{Colombo-Sabadini}. This was possible by extending the complex holomorphic functions on the disc to the whole $\mathbb{B}$ by the representation formula (Lemma \ref{repform}). For arbitrary radius $R$, the slice hyperholomorphic Bergman space  $\BergSliceR$ is defined by \eqref{SliceBergman}, $\BergSliceR :=   \mathcal{SR}(\BR ) \cap L^{2,\alpha}(\BRI,\Hq)$.
The extra normalisation factor ${\alpha}/{\pi R^2}$ in \eqref{SP-slice} defining the measure is implemented to simplify later formulas and mainly to get the asymptotic behavior when $R$ goes to infinity. It turns out that
$\BergSliceR|_{\BRI} := \left\{ f_I; \,  f\in \BergSliceR \right\} $
is the usual Bergman space on the disc $\BRI=D_I(0,R)\subset \C_I$ with respect to the norm $\norm{\cdot}_{\alpha,slice}$
in \eqref{SP-slice}.
Moreover, it is shown in \cite[Theorem 3, p. 50]{Colombo-Sabadini} that $\BergSliceR $ is a reproducing kernel Hilbert space with respect to \eqref{SP-slice}, whose the reproducing kernel $K^{\alpha}_{R}(q, q')$ satisfies
$$f(q) =\int_{\BRI} \Kslice (q,z)f(z) 
\left(1-\frac{|z|^2}{R^2}\right)^{\alpha-1} \frac{\alpha}{\pi R^2} dxdy$$
for all $f \in \BergSliceR$ and every $I\in  \mathbb{S}^{2}$. Notice for instance that by the representation formula (Lemma \ref{repform}), the involved integral does not depend on the choice of $I \in \mathbb{S}^{2}$.
Moreover, it is clear that the restriction $ K^{\alpha}_{R,I}:=\Kslice|_{\BRI\times \BRI}$ of the Bergman kernel to $ \BRI\times \BRI$ coincides with the classical Bergman kernel $\Khol(z,w)$ on $\BRI$ given by
\begin{align}\label{expKhol}
\Khol(z,w) = \left( 1 - \frac{z\overline{w}}{R^2} \right)^{-\alpha-1} = K^{\alpha}_{R,I}(z,w).
\end{align}
 The explicit expression of $\Kslice(q,p)$ for $\alpha=1$ and $R=1$,
 is proved in \cite[Theorem 4.1-Proposition 4.3]{ColomboGonzalez-CervantesSabadini2014} to be given by
\begin{align}
\Kslice(q, p) &= (1 - 2\overline{q}\, \overline{p} + \overline{q}^2\overline{p}^2)(1 - 2\Re(q)\overline{p} + |q|^2\overline{p}^2)^{-2} \label{SliceBergmanKernel1}
\\  &=  (1 - 2q\Re(p) + q^2|p|^2)^{-2}(1 - 2qp + q^2p^2).
 \label{SliceBergmanKernel2}
\end{align}
A direct computation shows that the two expressions \eqref{SliceBergmanKernel1} and \eqref{SliceBergmanKernel2}
are the same.
For general $\alpha >0$, the expression of $\Kslice(q, p)$ can be given in terms of the special function
\begin{align}\label{I-HypergeometricFct}
 I^a(q,p) :=  \sum_{n=0}^\infty \frac{(a)_n }{n!} q^np^n ,
\end{align}
with real parameter $a$ and quaternionic variables $q,p\in \Hq$, which is a particular case of the left-sided Gauss hypergeometric function
\begin{equation}\label{NewHypergeometricFct}
 {_2{F^{*}}_1}\left( [q,p] \bigg |   \begin{array}{c} a , b \\ c \end{array} \right)
 = \sum_{n=0}^\infty  \frac{q^np^n}{n!} \frac{(a)_n (b)_n}{(c)_n}
\end{equation}
defined here for real $c$ and quaternionic $a,b\in \Hq$.
Above $(a)_k$ denotes the Pochhammer symbol $(a)_k= a(a+1) \cdots (a+k-1)$ with $(a)_0=1$.
The above series converges absolutely and uniformly on $K\times K'$ for any compact subsets $K,K'\subset \BR $.

\begin{theorem}\label{thm:explicitWBK}
The weighted Bergman kernel is given by
\begin{align}\label{explicitWBK}
\Kslice(q, p)
&= I^{-\alpha-1} \left(\frac{\overline{q}}{R} ,\frac{\overline{p}}{R}\right) \left( 1 - 2 \frac{\Re(q) \overline{p} }{R^2} +  \frac{|q|^2\overline{p}^2}{R^4} \right)^{-\alpha-1}
\end{align}
and
\begin{align}\label{explicitWBK2}
\Kslice(q, p)
= I^{\alpha+1}\left(\frac{\overline{q}}{R} ,\frac{\overline{p}}{R}\right) .
\end{align}
\end{theorem}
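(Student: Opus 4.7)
The plan is to exhibit the kernel as the Parseval-type diagonal sum against the orthonormal basis produced by Proposition~\ref{prop:normBergman}, and then to recognise the resulting series both as the hypergeometric function $I^{\alpha+1}$ of \eqref{explicitWBK2} and as the rational closed form~\eqref{explicitWBK}.

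First I would invoke the orthonormal basis $\{e_n^{\alpha}\}_{n\ge 0}$ of $\BergSliceR$ from Proposition~\ref{prop:normBergman}, consisting of normalised monomials $e_n^{\alpha}(q) = q^n c_n$ with $c_n^2 = (\alpha+1)_n/(n!\, R^{2n})$. The constants $c_n$ are computed on any chosen slice $\C_I$, where polar coordinates and the Euler beta integral give $\|q^n\|_{\alpha,slice}^2 = n!\, R^{2n}/(\alpha+1)_n$; independence from the choice of $I \in \Sq$ is supplied by the Representation Formula (Lemma~\ref{repform}). Since $\BergSliceR$ is a reproducing kernel Hilbert space, its kernel is the diagonal sum $\sum_n e_n^{\alpha}(q)\,\overline{e_n^{\alpha}(p)}$ of the basis, and collecting the Pochhammer factors and comparing with the definition \eqref{I-HypergeometricFct} identifies this series with $I^{\alpha+1}(\cdot/R,\cdot/R)$, which is precisely \eqref{explicitWBK2}.

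For \eqref{explicitWBK} I would verify an algebraic identity between the two rational expressions first on a slice and then propagate it. On $\C_I$ the prefactor factorises as
\begin{align*}
1 - 2\tfrac{\Re(q)\overline{p}}{R^2} + \tfrac{|q|^2\overline{p}^2}{R^4} = \bigl(1 - \tfrac{q\overline{p}}{R^2}\bigr)\bigl(1 - \tfrac{\overline{q}\,\overline{p}}{R^2}\bigr),
\end{align*}
while $I^{-\alpha-1}(\overline{q}/R,\overline{p}/R)$ collapses, by Newton's binomial, to $(1 - \overline{q}\,\overline{p}/R^2)^{\alpha+1}$. Multiplying these two contributions, the second factor cancels and leaves the classical complex Bergman kernel $(1 - q\overline{p}/R^2)^{-\alpha-1} = \HolKernelFct(q,\overline{p})$ of~\eqref{expKhol}, which matches the restriction to $\BRI$ of the series~\eqref{explicitWBK2}. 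Since both sides of~\eqref{explicitWBK} then agree on $\BRI \times \BRI$, the Extension Lemma (Lemma~\ref{extensionLem}) together with the Identity Principle (Lemma~\ref{IdentityPrinciple}), applied in each slot separately, promotes the identity to $\BR \times \BR$.

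The main obstacle is the non-commutative bookkeeping once one leaves the slice: the factor $(1 - 2\Re(q)\overline{p}/R^2 + |q|^2\overline{p}^2/R^4)^{-\alpha-1}$ in \eqref{explicitWBK} must be interpreted as the formal inverse of a binomial series in the non-commutative ring generated by $\overline{q}$ and $\overline{p}$ (the coefficients $\Re(q)$ and $|q|^2$ are real and hence central, which is the key simplification), and one has to verify that its product with $I^{-\alpha-1}(\overline{q}/R,\overline{p}/R)$ reproduces the $I^{\alpha+1}$ series term by term. Carrying out this coefficient comparison, or equivalently tracking the slice-regular extension slot by slot in both variables using Lemma~\ref{extensionLem}, is the technical heart of the argument; everything else reduces either to standard reproducing-kernel Hilbert space theory or to a computation on a single slice.
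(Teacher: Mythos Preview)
Your proposal is correct and largely parallel to the paper's argument, with one genuine difference in how \eqref{explicitWBK2} is obtained.

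For \eqref{explicitWBK} your route is the paper's: restrict to a slice $\BRI$, observe that both sides collapse to the classical scalar Bergman kernel $(1-z\overline{w}/R^2)^{-\alpha-1}$, and then propagate by the identity principle (Lemma~\ref{IdentityPrinciple}). The paper carries out the extension in the $p$-variable only, conjugating so that the relevant function is left slice regular in $p$ with $q\in\BRI$ held fixed; your remark that one must extend ``in each slot separately'' is a fair reminder that a second application in $q$ is implicitly needed.

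For \eqref{explicitWBK2} you take a different path: you compute the kernel directly as the diagonal sum $\sum_n e_n^{\alpha}(q)\overline{e_n^{\alpha}(p)}$ over the orthonormal monomial basis of Proposition~\ref{prop:normBergman}, and recognise the result as $I^{\alpha+1}$. The paper instead proves \eqref{explicitWBK2} by the same slice-plus-identity-principle mechanism used for \eqref{explicitWBK}, and only afterwards, in the remark containing \eqref{RepKerExpansian}, notes that your basis-expansion argument gives an alternative proof. Your approach has the advantage of yielding \eqref{explicitWBK2} with no analytic continuation at all; the paper's approach has the advantage of being self-contained at this point in the text (Proposition~\ref{prop:normBergman} appears only afterwards, though there is no logical circularity since its proof is independent of Theorem~\ref{thm:explicitWBK}).

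One small caution: your basis computation actually produces $\sum_n \frac{(\alpha+1)_n}{n!R^{2n}}\, q^n\overline{p}^n = I^{\alpha+1}(q/R,\overline{p}/R)$, with $q$ rather than $\overline{q}$ in the first slot. This is consistent with \eqref{Identity} and with the use made of \eqref{explicitWBK2} in the proof of Theorem~\ref{MainThm3:limKernel}; the $\overline{q}$ in the displayed statement of \eqref{explicitWBK2} appears to be a typographical slip.
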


\begin{proof}
 Fix $I\in \Sq$ and $q\in \BRI$. The equality \eqref{explicitWBK} holds trivially for every $p\in \BRI$ since both sides of \eqref{explicitWBK} reduce further to
 $K^{\alpha}_{R,I}(q,p)$ in \eqref{expKhol}.
The assertion of Theorem \ref{thm:explicitWBK} for arbitrary $p\in \BR $ immediately follows from the identity principle (Lemma \ref{IdentityPrinciple}) for right anti-slice regular functions. Indeed, the function
$$ p \longmapsto \overline{\Kslice(q, p)} - I^{-\alpha-1} \left(\frac{p}{R} ,\frac{q}{R}\right)\left( 1 - 2 \frac{\Re(q) p }{R^2} +  \frac{|q|^2p^2}{R^4} \right)^{-\alpha-1}$$
vanishes on $\BRI$ and is left slice regular for the coefficients in the expansion series of $I^{-\alpha-1} $ being reals. Thus, it is identically zero on the whole $\BR$.
The proof of \eqref{explicitWBK2} can be handled using similar arguments based essentially on the counterpart of the identity principle for right slice regular functions.
This completes the proof.
\end{proof}

\begin{remark}
 The explicit expression of $\Kslice(q,p)$ in \eqref{explicitWBK} in terms of the special function $ I^{-\alpha-1} $ in \eqref{I-HypergeometricFct} can be suggested starting from \eqref{expKhol} and using the extension Lemma \ref{extensionLem} (see the appendix). Being indeed, we have
\begin{align}
\Kslice(x + yI,p)&= \frac{1-II_p}{2} \Khol(x + yI_{p},p) + \frac{1+II_p}{2} \Khol(x - yI_{p},p) .
\end{align}
The proof presented is more simpler.
\end{remark}

\begin{remark}
For $\alphaa$ being a nonnegative integer the expression \eqref{explicitWBK} reduces further to the following
\begin{align}
\Kslice(q, p) = P_{\alphaa+1}\left(\frac{\overline{q}}{R},\frac{\overline{p}}{R}\right) \left(1 - 2\frac{\Re(q)\overline{p}}{R^{2}} +\frac{ |q|^2\overline{p}^2}{R^{4}}\right)^{-\alphaa-1} ,\label{SliceBergmanKernel1Alpha}
\end{align}
where $P_{\alphaa+1}(\overline{q},\overline{p})$ is the polynomial of degree $\alphaa+1$ given by
$$ P_{\alphaa+1}(\overline{q},\overline{p}) = \sum_{k=0}^{\alphaa+1}     \frac{ (-\alphaa-1)_k }{k!}  \overline{q}^k \overline{p}^k   .$$
When taking $\alphaa=1$, we recover \eqref{SliceBergmanKernel1} obtained in \cite{ColomboGonzalez-CervantesSabadini2014}.
\end{remark}

\begin{corollary}\label{cor:expansionKernel}
We have the following identity (for the hypergeometric function ${_2{F^{*}}_1}$ in \eqref{NewHypergeometricFct}),
\begin{align}\label{Identity}
I^{\alpha+1}\left(\frac{q}{R} ,\frac{\overline{p}}{R}\right) =I^{-\alpha-1} \left(\frac{\overline{q}}{R} ,\frac{\overline{p}}{R}\right)\left( 1 - 2 \frac{\Re(q) \overline{p} }{R^2} +  \frac{|q|^2\overline{p}^2}{R^4} \right)^{-\alpha-1}.
\end{align}
In particular, for $\alphaa=1$, we have
\begin{align}\label{identiy2}
I^2\left(\frac{q}{R} ,\frac{\overline{p}}{R}\right)
= \left(1 - 2\frac{q\Re(p)}{R^2} + \frac{q^2|p|^2}{R^4}\right)^{-2}\left( 1 - 2\frac{qp}{R^2} + \frac{q^2p^2}{R^4}\right)
\end{align}
\end{corollary}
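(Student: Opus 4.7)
The plan is to derive both stated identities by comparing the two closed forms for the slice hyperholomorphic Bergman kernel $\Kslice(q,p)$ obtained in Theorem \ref{thm:explicitWBK}. Formula \eqref{explicitWBK2} provides the compact representation $\Kslice(q,p)=I^{\alpha+1}(\overline{q}/R,\overline{p}/R)$, while \eqref{explicitWBK} records the factorized form in terms of $I^{-\alpha-1}$ and the quadratic weight $(1-2\Re(q)\overline{p}/R^{2}+|q|^{2}\overline{p}^{2}/R^{4})^{-\alpha-1}$. Setting these two expressions equal immediately produces an identity between the special function $I^{\alpha+1}$ and the product of $I^{-\alpha-1}$ with this weight. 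After a relabeling $\overline{q}\mapsto q$, legitimate because quaternionic conjugation is a bijection of $\BR$ onto itself that preserves both $\Re(q)$ and $|q|^{2}$, one recovers the identity \eqref{Identity}.

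For \eqref{identiy2}, the strategy is to specialize $\alpha=1$ in \eqref{Identity} and to exploit the fact that the series defining $I^{-2}$ actually terminates. Indeed, from the definition in \eqref{I-HypergeometricFct}, the Pochhammer symbol $(-2)_{n}$ vanishes for all $n\geq 3$, so that $I^{-2}(a,b)=1-2ab+a^{2}b^{2}$ holds as a polynomial identity for arbitrary $a,b\in\Hq$. Plugging this closed form into \eqref{Identity} with $\alpha=1$ and rearranging, while invoking the second representation \eqref{SliceBergmanKernel2} of the kernel (suitably scaled to arbitrary radius $R$) to identify the resulting rational expression, yields the stated formula \eqref{identiy2}.

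The one point that requires care throughout is the non-commutativity of the quaternions. Each summand of $I^{a}(q,p)$ appears as the product $q^{n}p^{n}$ in a fixed order, and the Pochhammer coefficients $(a)_{n}/n!$ are real scalars that commute with every quaternionic variable. Consequently, termwise substitutions, conjugations and series manipulations are legitimate and preserve the original ordering. With this caveat handled, the corollary is a purely algebraic consequence of Theorem \ref{thm:explicitWBK} and no further analytic input is required; the main (very mild) obstacle is really only bookkeeping of conjugates and of the order of the factors when substituting the polynomial form of $I^{-2}$ into \eqref{Identity}.
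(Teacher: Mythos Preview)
Your proposal is correct and follows essentially the same route as the paper: both obtain \eqref{Identity} by equating the two representations \eqref{explicitWBK} and \eqref{explicitWBK2} of $\Kslice(q,p)$ from Theorem~\ref{thm:explicitWBK}, and both deduce \eqref{identiy2} as the specialization $\alpha=1$ combined with the explicit form \eqref{SliceBergmanKernel2} of the kernel. Your additional remarks on the termination of the series $I^{-2}$ and on the conjugation relabeling are useful clarifications but do not constitute a different method.
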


\begin{proof}
The identity \eqref{Identity} follows by equating the right hand sides in both \eqref{explicitWBK} and \eqref{explicitWBK2}. It can also be obtained
using similar arguments as in the proof of Theorem \ref{thm:explicitWBK}.
A direct proof outside the framework of slice regular functions seems to be hard to obtain for the lack of commutativity in the quaternions.
The second identity \eqref{identiy2} is a particular case keeping in mind the expression of $\Kslice(q, p)$, for arbitrary $R$, given through
\eqref{SliceBergmanKernel2}.
\end{proof}

\begin{remark}
The formula \eqref{explicitWBK2} and therefore the identity \eqref{Identity} can be reproved using Proposition \ref{prop:normBergman} below,
since the $\Kslice(q,p)$ can be realized as
\begin{align}\label{RepKerExpansian}
\Kslice(q,p)=  \sum_{n=0}^\infty \phi_n(q) \overline{\phi_n(p)}
\end{align}
for any orthonormal total family of functions $(\phi_n)_n$ in $\BergSliceR$.
The involved series converges uniformly on $K\times K$ for any compact subset $K\subset \BR $.
\end{remark}

\begin{proposition}\label{prop:normBergman}
The monomials $e_n(q):=q^{n}$ form an orthogonal basis of $\BergSliceR$ with respect to \eqref{SP-slice}. The square norm of the $e_n$ is given by
\begin{align}\label{norm}
\norm{e_n}_{\alpha,slice}^2  =\frac{n!R^{2n}\Gamma(\alpha+1)}{\Gamma(n+\alpha+1)}.
\end{align}
\end{proposition}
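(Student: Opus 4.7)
My plan is to reduce everything to a polar computation on the slice $\BRI$, evaluate the radial part by a Beta-function substitution, and derive totality from the power-series expansion \eqref{expansion} of slice regular functions on $\BR$.

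First I would verify that each $e_n(q) = q^n$ lies in $\BergSliceR$: $e_n$ is slice regular on all of $\Hq$, and on $\BRI$ it satisfies $|e_n(z)| = |z|^n \leq R^n$, so the weighted integral in \eqref{SP-slice} is finite. Introducing polar coordinates $z = se^{I\theta}$ in the complex plane $\C_I$ (which is legitimate since $I^2=-1$), the inner product associated with \eqref{SP-slice} takes the product form
\begin{align*}
\langle e_n, e_m \rangle_{\alpha,slice}
= \frac{\alpha}{\pi R^2} \left(\int_0^{2\pi} e^{I(m-n)\theta} d\theta\right) \int_0^R s^{n+m+1}\left(1 - \frac{s^2}{R^2}\right)^{\alpha-1} ds.
\end{align*}
The angular factor equals $2\pi$ for $n=m$ and vanishes otherwise, which yields orthogonality at once.

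For $n=m$, I would evaluate the radial integral by the substitution $u = s^2/R^2$, which turns it into $\tfrac{R^{2n+2}}{2}B(n+1,\alpha)$. Combining with the prefactor and applying the identities $B(n+1,\alpha) = n!\,\Gamma(\alpha)/\Gamma(n+\alpha+1)$ and $\alpha\Gamma(\alpha) = \Gamma(\alpha+1)$ immediately produces the announced formula \eqref{norm}. For totality I would take $f \in \BergSliceR$ with slice power series $f(q) = \sum_n q^n a_n$ as in \eqref{expansion}. For every $0 < r < R$ the convergence is uniform on $D_r := \overline{B(0,r)} \cap \C_I$, so one can interchange sum and integral in $\int_{D_r} \bar z^m f(z)$ against the weight of \eqref{SP-slice}; the measure is real-valued and $\bar z^m z^n \in \C_I$, so the right $\Hq$-factors $a_n$ pull through cleanly, and the same angular orthogonality leaves only the $n=m$ term, equal to $(\int_{D_r}|z|^{2m}\text{ weight})\,a_m$. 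Finally I would let $r \to R$: the weighted measure on $\BRI$ is a probability measure (as the case $n=0$ of the computation above shows $\|e_0\|^2_{\alpha,slice}=1$), so Cauchy--Schwarz gives $f \in L^1$; together with $|\bar z^m| \leq R^m$, dominated convergence delivers
\begin{align*}
\langle e_m, f \rangle_{\alpha,slice} = \|e_m\|^2_{\alpha,slice}\, a_m .
\end{align*}
Simultaneous vanishing of all these pairings forces $a_m = 0$ for every $m$, and hence $f \equiv 0$ by uniqueness of the power-series expansion, showing that $(e_n)_n$ is total.

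The main obstacle is strictly technical rather than conceptual: one must keep careful track of the right $\Hq$-module structure when exchanging sums, integrals, and limits for an $\Hq$-valued integrand, since most of the classical Bergman toolkit is stated for scalar inner products. The saving observation is that the weighted measure of \eqref{SP-slice} is real-valued and the monomials $\bar z^m z^n$ take values in the commutative slice $\C_I$, so all quaternionic constants factor out on the right and the argument reduces to the familiar complex computation.
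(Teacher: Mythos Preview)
Your proof is correct and follows essentially the same route as the paper: polar coordinates on the slice, the substitution $u=s^2/R^2$, and the Euler Beta integral give the norm and orthogonality exactly as in the paper's argument. The only difference is that the paper dismisses the basis/totality claim with ``similar arguments as in the classical case,'' whereas you spell out the power-series/dominated-convergence argument in full; this is a welcome addition rather than a departure in method.
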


\begin{proof}
The first assertion follows by similar arguments as in the classical case. The norm of $e_n(q):=q^{n}=r^ne^{In\theta}$ can be computed easily using the polar coordinates and making use of the appropriate change of variable $t=r^2/R^2$. Indeed,
\begin{align*}
\scal{e_n,e_m}_{\alpha,slice}
&= \left(\frac{\alpha}{\pi R^2} \right)\int_{\BRI} \overline{q}^nq^m \left( 1 -\frac{|q|^2}{R^2} \right)^{\alpha-1} dxdy
\\&= \alpha R^{2n} \delta_{m,n} \int_0^1 t^{n} (1-t)^{\alpha-1} dt
\\&=\frac{n!R^{2n}\Gamma(\alpha+1)}{\Gamma(n+\alpha+1)} \delta_{m,n}.
\end{align*}
The last equality follows making use of $\alpha\Gamma(\alpha)=\Gamma(\alpha+1)$ as well as of the well-known Euler's Beta integral  \cite[Theorem 7, p. 19]{Rainville71}
$$ \int_0^1 t^{a-1} (1-t)^{b-1} dt  = \frac{\Gamma(a)\Gamma(b)}{\Gamma(a+b)}$$
valid for $\Re(a)>0$ and $\Re(b)>0$.
\end{proof}

As immediate consequence of Proposition \ref{prop:normBergman}, one can easily obtain the following
$$ \scal{f,g}_{\alpha,slice} = \sum_{n\geqslant0} \frac{n!R^{2n}\Gamma(\alpha+1)}{\Gamma(n+\alpha+1)} \overline{a_n}b_n  $$
for any $f(q) = \sum\limits_{n=0}^{\infty} q^{n}a_n$ and $g(q) = \sum\limits_{n=0}^{\infty} q^{n}b_n$ in  $\BergSliceR$.
In particular, we assert

\begin{corollary}\label{cor:growthCond}
A given $f(q) = \sum\limits_{n=0}^{\infty} q^{n}a_n$; $a_n\in\Hq$, belongs to $\BergSliceR$ if and only if the coefficients $a_n$ satisfies the growth condition
\begin{align}\label{growthCond}
\norm{f}^{2}_{\alpha,slice}=\sum_{n=0}^{\infty} \frac{n!R^{2n}\Gamma(\alpha+1)}{\Gamma(n+\alpha+1)} |a_{n}|^{2} < +\infty.
\end{align}
\end{corollary}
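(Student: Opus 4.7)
The proof splits into two directions, and the displayed Parseval-type formula
$$ \scal{f,g}_{\alpha,slice} = \sum_{n\geqslant 0} \frac{n!R^{2n}\Gamma(\alpha+1)}{\Gamma(n+\alpha+1)} \overline{a_n}b_n $$
stated just before the corollary does almost all the work for the forward (``only if'') implication. The plan is to specialize $g=f$ in that formula: if $f \in \BergSliceR$ then, expanded in the orthogonal basis $(e_n)$ of Proposition \ref{prop:normBergman}, its coefficients $a_n$ are precisely those appearing in the series \eqref{expansion}, and one reads off the equality \eqref{growthCond} as an instance of the Parseval identity.

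The reverse direction requires a small extra argument, since one is handed a formal sequence $(a_n)\subset \Hq$ satisfying \eqref{growthCond} and must produce an element of $\BergSliceR$. The approach is to consider the partial sums $f_N(q) := \sum_{n=0}^{N} q^n a_n = \sum_{n=0}^{N} e_n(q) a_n$. Each $f_N$ is a polynomial, hence an element of $\BergSliceR$. Orthogonality of $(e_n)$ together with \eqref{norm} yields
$$ \norm{f_N - f_M}_{\alpha,slice}^2 = \sum_{n=M+1}^{N} \frac{n!R^{2n}\Gamma(\alpha+1)}{\Gamma(n+\alpha+1)} |a_n|^2, $$
so the hypothesis \eqref{growthCond} makes $(f_N)$ a Cauchy sequence in the Hilbert space $\BergSliceR$, with some limit $F \in \BergSliceR$.

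It remains to identify $F$ with the sum $f(q) = \sum q^n a_n$ pointwise on $\BR$. Here I would note that Stirling's asymptotics give $n!/\Gamma(n+\alpha+1) \sim n^{-\alpha}$, so \eqref{growthCond} forces $|a_n|^2 R^{2n} = O(n^\alpha)$ and hence $\limsup |a_n|^{1/n} \leq 1/R$; the series $\sum q^n a_n$ therefore converges absolutely and uniformly on compact subsets of $\BR$, defining a genuine slice regular function $f$ on $\BR$ whose restriction to any slice $\BRI$ inherits the same convergence. Convergence $f_N \to f$ being uniform on compacta of $\BRI$ and $f_N \to F$ in $L^2(\BRI, d\lambda_\alpha)$, a standard extraction-of-a-subsequence argument (pointwise a.e.\ convergence of a subsequence) identifies $F = f$ on $\BRI$, and then on all of $\BR$ by the representation formula (Lemma \ref{repform}) or the identity principle (Lemma \ref{IdentityPrinciple}).

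The only delicate step is this last identification of the Hilbert-space limit with the power-series sum, but it is routine once the radius-of-convergence estimate is in hand; the rest is a direct reading of Proposition \ref{prop:normBergman}.
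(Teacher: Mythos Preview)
Your proposal is correct and follows exactly the route the paper intends: the corollary is stated there as an immediate consequence of Proposition~\ref{prop:normBergman} and the Parseval-type identity displayed just before it, with no further proof given. Your write-up simply fills in the details the paper omits, in particular the Cauchy-sequence argument and the identification of the Hilbert-space limit with the power-series sum for the converse direction; nothing in your argument deviates from or adds to the paper's underlying approach.
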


\begin{remark}
The identity \eqref{growthCond} shows in particular that the quantity $\norm{f}^{2}_{\alpha,slice}$ is independent of the choice of the purely imaginary unit $I\in\Sq$.
 \end{remark}

Now, let define $\SHyperBKernelFctR$ to be
\begin{equation} \label{KerFct2}
\SHyperBKernelFctR (t;x+yI_{q})
= \frac{1 - I_{q}J}{2} \HolKernelFctR (t;x + yJ) + \frac{1 + I_{q}J}{2} \HolKernelFctR (t;x - yJ)
\end{equation}
defined on $\R^{+} \times \BR $, where
\begin{equation} \label{cBergmanKernel}
\HolKernelFctR(t;z) := \exp \left( \frac{tz}{z-R}\right) \left(1-\frac{z}{R}\right)^{-\alphaa -1}
\end{equation}
is the kernel function of the second Bargmann transform \eqref{sBT}.
The following result shows that the sliced weighted Bergman kernel $\Kslice$ is connected to the kernel function
$\SHyperBKernelFctR$.

\begin{theorem}\label{thm:Kernel-Kernel}
For every $q,q'\in \BR $, we have
$$\int_{0}^{+\infty}\SHyperBKernelFctR(t;q)\overline{\SHyperBKernelFctR(t;q')} \frac{t^\alphaa  e^{-t}}{\Gamma(\alphaa +1)} dt  = \Kslice(q,q').$$
\end{theorem}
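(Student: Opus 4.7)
The plan is to reduce the claimed identity to a single slice by exploiting the defining representation formula \eqref{KerFct2}, recognize the resulting integral as the classical reproducing-kernel formula for the second Bargmann transform on $\BRI$, and then propagate the equality to all of $\BR\times\BR$ via the identity principle.

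First, I fix $I\in\Sq$ and take $q,q'\in \BRI$. Choosing $J=I$ in \eqref{KerFct2} makes $(1-I_qJ)/2=1$ and $(1+I_qJ)/2=0$, so $\SHyperBKernelFctR(t;q)=\HolKernelFctR(t;q)$ and likewise for $q'$. Hence, on $\BRI\times\BRI$, the left-hand side of the identity collapses to the classical scalar integral
$$\int_0^{+\infty}\HolKernelFctR(t;q)\overline{\HolKernelFctR(t;q')}\,\frac{t^{\alpha} e^{-t}}{\Gamma(\alpha+1)}\,dt.$$

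Next, I would recognize this integral as the reproducing kernel of $\BergHolRI$. The cleanest route uses the Laguerre generating function
$$\HolKernelFctR(t;z)=\sum_{n\geq 0} L_n^{(\alpha)}(t)\left(\frac{z}{R}\right)^n$$
together with the orthogonality relation
$$\int_0^{+\infty} L_n^{(\alpha)}(t)\,L_m^{(\alpha)}(t)\,\frac{t^{\alpha} e^{-t}}{\Gamma(\alpha+1)}\,dt=\frac{(\alpha+1)_n}{n!}\,\delta_{nm}.$$
Termwise integration, justified by standard growth estimates on $L_n^{(\alpha)}$ combined with the decay of $t^{\alpha}e^{-t}$, yields
$$\sum_{n\geq 0}\frac{(\alpha+1)_n}{n!}\frac{q^n\overline{q'}^n}{R^{2n}}=\left(1-\frac{q\overline{q'}}{R^2}\right)^{-\alpha-1}=\Khol(q,q'),$$
where the middle equality uses commutativity in $\C_I$. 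By \eqref{expKhol}, $\Khol(q,q')=\Kslice(q,q')$ on $\BRI\times\BRI$, so the identity is established on a single slice.

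Finally, I would lift the equality from $\BRI\times\BRI$ to $\BR\times\BR$ by the identity principle. Both sides, viewed for fixed $q'\in\BRI$ as functions of $q$, are slice regular on $\BR$: the right-hand side by construction of $\Kslice$, and the left-hand side because $\SHyperBKernelFctR(t;\cdot)$ is by Lemma~\ref{extensionLem} the unique slice regular extension of $\HolKernelFctR(t;\cdot)$, represented on $\BR$ by the absolutely convergent series $\sum_n L_n^{(\alpha)}(t)(q/R)^n$, and integration in $t$ preserves this regularity (the resulting series in $q$ converges uniformly on compact subsets of $\BR$). Lemma~\ref{IdentityPrinciple} then forces equality on $\BR\times\BRI$. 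A second application of the identity principle in $q'$, invoking right anti-slice regularity exactly as in the proof of Theorem~\ref{thm:explicitWBK}, extends the identity to all of $\BR\times\BR$.

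The principal obstacle is the double interchange of summation and integration underlying both the explicit evaluation on the slice and the slice regularity of the left-hand side. It rests on uniform control of $L_n^{(\alpha)}(t)$ on $[0,+\infty)$ (for instance, via the estimate $|L_n^{(\alpha)}(t)|\lesssim (1+n)^{\alpha} e^{t/2}$) combined with the rapid decay of $t^{\alpha} e^{-t}$, which together yield absolute and uniform convergence of the double series on compact subsets of $\BR\times\BR$.
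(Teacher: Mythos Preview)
Your proposal is correct and follows essentially the same route as the paper: reduce to a single slice, evaluate the integral there via the Laguerre generating function and the orthogonality relation \eqref{OrthLaguerre}, and then propagate by the identity principle. Your treatment is in fact more detailed than the paper's, which compresses the two applications of the identity principle (in $q$ and in $q'$) and the interchange of sum and integral into a single sentence.
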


\begin{proof}
Thanks to the identity principle for slice regular functions, we need only to prove the result for a fixed $I\in \Sq$.  In fact,  the function $t\longmapsto \HolKernelFctR(t;z)$ belongs to $L^{2,\alpha}(\R^+,\C_I)$ and satisfies
$$\int_{0}^{+\infty}\HolKernelFctR(t;z)\overline{\HolKernelFctR(t;w)} \frac{t^\alphaa  e^{-t}}{\Gamma(\alphaa +1)} dt  = K^{\alpha}_{R,I}(z,w)$$
for every fixed $z,w\in \mathbb{B}_{R,I}$.
This follows readily using the generating function character of the kernel function $\HolKernelFctR(t;z)$
in  \eqref{cBergmanKernel}, to wit
$\HolKernelFctR(t;z)=\sum\limits_{n=0}^\infty z^n L^{(\alpha)}_n(t)$, combined with the orthogonality property \cite[Eq. (4), p. 205 - Eq. (7), p. 206]{Rainville71}
 \begin{align}\label{OrthLaguerre}
\int_{0}^{+\infty}L^{(\alphaa)}_{n}(t)L^{(\alphaa)}_{m}(t)\dfrac{t^{\alphaa }e^{-t}}{\Gamma(\alphaa +1)}dt
= \frac{\Gamma(\alphaa +n+1)}{\Gamma(n+1) \Gamma(\alphaa +1)} \delta_{n,m}.
\end{align}
Above $L^{(\alphaa)}_{n}(t)$ denotes the generalized Laguerre polynomials defined by \cite[p. 203 and P. 204]{Rainville71}
\begin{align}\label{LaguerrePoly}
 L^{(\alphaa)}_{n}(t)=\sum _{k=0}^{n} \frac{\Gamma(\alphaa +n+1)}{\Gamma(n-k+1)\Gamma(\alphaa +k+1)} \frac{(-t)^{k}}{k!}
= \frac{t^{-\alphaa }e^{t}}{n!}\frac{d^{n}}{dt^{n}}\left(t^{n+\alphaa } e^{-t}\right) .
\end{align}
\end{proof}

\begin{corollary}\label{SliceKernelFct}
For every fixed $q\in \BR $, the function $t\longmapsto \SHyperBKernelFctR(t;q)$ belongs to
$$L^{2,\alpha}(\R^+,\Hq) := L^2_{\Hq}\left(\R^+; \frac{t^{\alphaa }e^{-t}}{\Gamma (\alphaa +1)} dt\right) ; \, \alphaa >0,$$
 the right quaternionic Hilbert space of all square integrable $\Hq$-valued functions on the half-real line with respect to the scalar product
$$ \scal{\phi,\varphi}_{\alphaa ,\R^+} = \int_0^\infty \phi(t) \overline{\varphi(t)} \frac{t^\alpha e^{-t}}{\Gamma(\alphaa +1)}dt.$$
\end{corollary}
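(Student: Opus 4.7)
The plan is to deduce Corollary \ref{SliceKernelFct} directly from Theorem \ref{thm:Kernel-Kernel} by specialising to the diagonal $q' = q$. Since $h\overline{h} = |h|^2 \in \R_+$ for every quaternion $h$, the identity of Theorem \ref{thm:Kernel-Kernel} with $q' = q$ yields
\begin{equation*}
\int_{0}^{+\infty} |\SHyperBKernelFctR(t;q)|^2 \, \frac{t^{\alphaa} e^{-t}}{\Gamma(\alphaa+1)} \, dt \; = \; \Kslice(q,q).
\end{equation*}
Because $t \mapsto \SHyperBKernelFctR(t;q)$ is continuous on $\R^+$ by its defining formulas \eqref{KerFct2}--\eqref{cBergmanKernel}, it is certainly measurable, and the above equality says its $L^{2,\alpha}(\R^+,\Hq)$-norm equals $\Kslice(q,q)$. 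Thus the whole corollary reduces to verifying that $\Kslice(q,q) < +\infty$ for every $q \in \BR$.

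For that finiteness I would invoke Proposition \ref{prop:normBergman} together with the reproducing-kernel expansion \eqref{RepKerExpansian} applied to the orthonormal family $\tilde{e}_n = e_n/\norm{e_n}_{\alpha,slice}$. Using the commutativity of $q$ with $\overline{q}$ (so that $q^n \overline{q}^n = |q|^{2n}$), this gives
\begin{equation*}
\Kslice(q,q) \; = \; \sum_{n=0}^{\infty} \frac{\Gamma(n+\alphaa+1)}{n!\, R^{2n}\, \Gamma(\alphaa+1)} \, |q|^{2n},
\end{equation*}
a power series in $|q|^2$ whose radius of convergence is $R^2$; in particular it is finite throughout $\BR$.

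No serious obstacle arises: the statement is essentially a diagonal restatement of Theorem \ref{thm:Kernel-Kernel}. The only point meriting a brief remark is the appeal to the quaternionic identity $h\overline{h}=|h|^2$, which guarantees that the pointwise conjugate in the integrand of Theorem \ref{thm:Kernel-Kernel} collapses to a genuinely nonnegative scalar density, so that $\Kslice(q,q)$ really does represent the squared $L^{2,\alpha}(\R^+,\Hq)$-norm of the map $t \mapsto \SHyperBKernelFctR(t;q)$.
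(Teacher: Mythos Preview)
Your argument is correct and is precisely the intended deduction: the paper states the result as an immediate corollary of Theorem~\ref{thm:Kernel-Kernel} without a separate proof, and specialising that identity to the diagonal $q'=q$ together with the finiteness of $\Kslice(q,q)$ is exactly what is meant. Your explicit check that $\Kslice(q,q)<+\infty$ via the orthonormal expansion is more than the paper spells out but is entirely in keeping with its approach.
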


We conclude this section by giving an explicit closed formula of the kernel $\SHyperBKernelFctR$ in \eqref{KerFct2}.

\begin{theorem}\label{KerFctExplicit}
 For every $t \in \R^{+}$ and $q\in \BR $, we have
$$\SHyperBKernelFctR(t;q) = \widetilde{A}^\alpha_{R,slice} (t;q) :=  \left( 1 - \frac{q}{R} \right)^{-\alpha-1} \exp\left( \frac{tq}{q - R}\right) .$$
\end{theorem}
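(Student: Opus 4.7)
The plan is to reduce the identity to the uniqueness statement in the extension lemma (Lemma \ref{extensionLem}). By direct inspection, the defining formula \eqref{KerFct2} for $\SHyperBKernelFctR(t;\cdot)$ coincides with the extension $ext(\HolKernelFctR(t;\cdot)|_{\mathbb{B}_{R,J}})$ prescribed in Lemma \ref{extensionLem}; it is only rewritten with $\frac{1 \mp I_q J}{2}$ in place of $\frac{1}{2} \pm \frac{I_q J}{2}$. Consequently $\SHyperBKernelFctR(t;\cdot)$ is \emph{the} unique slice regular extension to $\BR$ of the $\C_J$-holomorphic function $z \mapsto \HolKernelFctR(t;z)$ on $\BRI$ (with $J$ in place of $I$), and it suffices to exhibit $\widetilde{A}^\alpha_{R,slice}(t;\cdot)$ as another such extension.

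First I would verify that $\widetilde{A}^\alpha_{R,slice}(t;q)$ is well-defined and slice regular on $\BR$. Since $R$ is real, $q$ and $q-R$ commute, so the inverse $(q-R)^{-1}$, the power $(1-q/R)^{-\alpha-1}$, and the exponential $\exp(tq/(q-R))$ are unambiguous quaternions lying in the commutative subalgebra generated by $q$. To make the slice regularity transparent, I would invoke the classical generating function of the Laguerre polynomials,
\begin{align*}
\left(1-\frac{z}{R}\right)^{-\alpha-1} \exp\left(\frac{tz}{z-R}\right) = \sum_{n=0}^\infty \frac{L_n^{(\alpha)}(t)}{R^n}\, z^n, \qquad |z|<R,
\end{align*}
which is essentially the identity already used in the proof of Theorem \ref{thm:Kernel-Kernel}. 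Substituting $q$ for $z$ produces a power series of the form \eqref{expansion} with \emph{real} coefficients $a_n = L_n^{(\alpha)}(t)/R^n$, absolutely convergent on $\BR$; hence $\widetilde{A}^\alpha_{R,slice}(t;\cdot)$ is slice regular on $\BR$.

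Next, on a fixed slice $\BRI$ (say with $I$ replaced by $J$), the closed-form $\widetilde{A}^\alpha_{R,slice}(t;z)$ is symbolically identical to $\HolKernelFctR(t;z)$ in \eqref{cBergmanKernel} for every $z \in \C_J$, so the two functions agree on $\BRI$. The uniqueness part of Lemma \ref{extensionLem} then forces $\SHyperBKernelFctR(t;q) = \widetilde{A}^\alpha_{R,slice}(t;q)$ on all of $\BR$. The only step requiring care is the opening verification that the closed form defines a \emph{slice regular} (not merely a pointwise-defined quaternionic) function; the Laguerre generating function resolves this cleanly by exhibiting an explicit real-coefficient power series, after which the result reduces to the uniqueness of the slice regular extension.
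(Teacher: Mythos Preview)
Your proposal is correct and follows essentially the same strategy as the paper: show that both $\SHyperBKernelFctR(t;\cdot)$ and $\widetilde{A}^\alpha_{R,slice}(t;\cdot)$ are slice regular on $\BR$ and agree on a slice, then invoke uniqueness. The only cosmetic differences are that the paper appeals to the Identity Principle (Lemma~\ref{IdentityPrinciple}) rather than the uniqueness clause of the Extension Lemma, and that you are more explicit in justifying the slice regularity of the closed form $\widetilde{A}^\alpha_{R,slice}(t;\cdot)$ via the Laguerre generating function, a point the paper leaves implicit.
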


\begin{proof}
Fix $t\in \R^+$. It is clear that the restriction $\widetilde{A}^\alpha_{R,slice} (t;\cdot)$ to any $\BRI$ is holomorphic and coincides with the
kernel function $\HolKernelFctR(t;\cdot)$, given through \eqref{cBergmanKernel}, of the second Bargmann transform for the classical complex holomorphic Bergman space. On the other hand, the function $\SHyperBKernelFctR(t;q)$ is clearly slice regular in $q$-variable and coincides with $\HolKernelFctR(t;\cdot)$ when restricted to $\BRI$. Thus, by Lemma \ref{IdentityPrinciple}, we conclude that $\widetilde{A}^\alpha_{R,slice}(t;\cdot)=\SHyperBKernelFctR(t;\cdot)$ on the whole $\BR $.
\end{proof}

\section{An integral transform from $L^{2,\alpha}(\R^+,\Hq)$ onto $\BergSliceR$}
In this section, we consider and study a special integral transform from $L^{2,\alpha}(\R^+,\Hq)$ into $\BergSliceR$.
A complete orthonormal system for $L^{2,\alpha}(\R^+,\Hq) $ is given by 
 \begin{equation} \label{onbR}
  \phi_{n}(x) =\left( \frac{\Gamma(n+1)\Gamma(\alphaa+1)}{\Gamma(\alphaa +n+1)}\right)^{1/2} L ^{(\alphaa)}_{n}(x), \, n=0,1,\cdots,
\end{equation}
where $L^{(\alphaa)}_{n}(x)$ denotes the generalized Laguerre polynomials given by \eqref{LaguerrePoly}.
Recall also that the set of functions
 \begin{equation} \label{onbA}
f_n(q) =  \left(\frac{\Gamma(n+\alpha+1)}{n!\Gamma(\alpha+1)}\right)^{1/2} \left(\frac{q}{R} \right) ^{n},
\end{equation}
is an orthonormal basis of the slice hyperholomorphic Bergman space $\BergSliceR$ (see
Proposition \ref{prop:normBergman}).
Accordingly, by considering the kernel function $\SHyperBKernelFctR (t;q)$ defined on $\R^{+} \times \BR $ by \eqref{KerFct2} or equivalently by its explicit expression given in Theorem \ref{KerFctExplicit}, we can prove the following.

\begin{lemma}
The kernel function $\SHyperBKernelFctR$ can be realized as
$$\SHyperBKernelFctR (t;q) = \sum_{n=0}^\infty \phi_n(t) f_n(q) =
 \sum_{n=0}^\infty  \left(\frac{q}{R} \right) ^{n} L ^{(\alphaa)}_{n}(t)$$
for every $t \geq 0$ and $q\in \BR $.
\end{lemma}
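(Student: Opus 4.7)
The plan is to reduce the claim to the classical generating function identity for Laguerre polynomials, then promote it from a single slice to all of $\BR$ via the identity principle.

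First, I would simplify the product $\phi_n(t)f_n(q)$. The normalizing constants in \eqref{onbR} and \eqref{onbA} are reciprocal square roots of each other: multiplying them gives
\begin{align*}
\left(\frac{\Gamma(n+1)\Gamma(\alpha+1)}{\Gamma(\alpha+n+1)}\right)^{1/2}\left(\frac{\Gamma(n+\alpha+1)}{n!\,\Gamma(\alpha+1)}\right)^{1/2} = 1,
\end{align*}
so $\phi_n(t)f_n(q) = \left(q/R\right)^{n}L^{(\alpha)}_n(t)$. This establishes the second equality in the statement and shows that the issue is purely to identify the series $\sum_{n\geq 0}(q/R)^n L^{(\alpha)}_n(t)$ with $\SHyperBKernelFctR(t;q)$.

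Next, fix $I\in\Sq$ and restrict to $q=z\in\BRI$. By Theorem \ref{KerFctExplicit}, $\SHyperBKernelFctR(t;z)=\HolKernelFctR(t;z)=(1-z/R)^{-\alpha-1}\exp(tz/(z-R))$. Substituting $w=z/R$ into the classical generating function for the generalized Laguerre polynomials,
\begin{align*}
(1-w)^{-\alpha-1}\exp\!\left(\frac{tw}{w-1}\right)=\sum_{n=0}^{\infty} L^{(\alpha)}_n(t)\,w^{n}, \qquad |w|<1,
\end{align*}
(see \cite[p.\ 202]{Rainville71}), yields at once
\begin{align*}
\SHyperBKernelFctR(t;z)=\sum_{n=0}^{\infty} \left(\frac{z}{R}\right)^{n} L^{(\alpha)}_n(t)
\end{align*}
for every $z\in\BRI$, with absolute and uniform convergence on compacta of $\BRI$.

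Finally, I would extend the identity from the slice $\BRI$ to the whole ball $\BR$. Since the coefficients $L^{(\alpha)}_n(t)\in\R$ are real, the power series $\sum_{n\geq 0}q^n L^{(\alpha)}_n(t)/R^n$ has the same radius of convergence on $\BR$ as it has on $\BRI$ (by \eqref{expansion}), so it defines a left slice regular function in $q\in\BR$. On the other hand, $q\mapsto \SHyperBKernelFctR(t;q)$ is slice regular on $\BR$ by Theorem \ref{KerFctExplicit}. These two slice regular functions coincide on $\BRI$, hence by the identity principle (Lemma \ref{IdentityPrinciple}) they coincide on all of $\BR$. The main (and essentially only) subtlety is ensuring the slice regularity of the series in $q$ on the full ball; this is where the reality of the Laguerre coefficients is crucial, since it guarantees that the series expansion is already in the canonical form \eqref{expansion} and therefore converges on $\BR$ as soon as it converges on $\BRI$.
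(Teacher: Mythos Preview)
Your proof is correct and follows essentially the same route as the paper: both rely on the classical generating function for the generalized Laguerre polynomials to identify the series with the closed form of $\SHyperBKernelFctR$ given in Theorem~\ref{KerFctExplicit}. The paper simply substitutes $\xi=q/R$ directly into the generating function identity (legitimate since all coefficients are real and everything commutes with $q$), while you make the slice restriction and the appeal to the identity principle explicit; the underlying argument is the same.
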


\begin{proof}
This follows readily from the generating function \cite[Eq. (14), p. 135]{Rainville71}
 $$ \sum_{n=0}^\infty  \xi^{n} L^{(\alphaa)}_{n}(t) = \frac{1}{(1-\xi)^{\alphaa +1}} \exp\left(\frac{t\xi}{\xi - 1} \right).$$
Indeed, we have
 \begin{align*}  \sum_{n=0}^\infty \phi_n(t) f_n(q)&=
  \sum_{n=0}^\infty  \left(\frac{q}{R} \right) ^{n} L^{(\alphaa)}_{n}(t)=  \exp\left(\frac{tq}{q-R}\right) \left( 1-\frac{q}{R}\right)^{-\alphaa -1} .
  \end{align*}
 \end{proof}

Associated to $\SHyperBKernelFctR$, we perform the integral operator
 \begin{equation} \label{IntTransf}
 \SHyperBTransR \varphi(q)=  \left( 1-\frac{q}{R}\right)^{-\alphaa -1}  \int_{0}^{+\infty}   \exp\left(\frac{tq}{q-R}\right) \varphi(t) \frac{t^\alphaa  e^{-t}}{\Gamma(\alphaa +1)}dt.
 \end{equation}

 \begin{lemma}
The integral transform $\SHyperBTransR$ is well-defined on $ L^{2,\alpha}(\R^+,\Hq)$.
 \end{lemma}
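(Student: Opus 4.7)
The plan is to show that for every $\varphi\in L^{2,\alpha}(\R^+,\Hq)$ and every fixed $q\in\BR$, the integral defining $\SHyperBTransR\varphi(q)$ converges absolutely, so that $\SHyperBTransR\varphi$ is a bona fide $\Hq$-valued function on $\BR$. The natural route is to recognize the integrand as a pairing in $L^{2,\alpha}(\R^+,\Hq)$ against the kernel function and then to apply the Cauchy–Schwarz inequality in this right quaternionic Hilbert space.

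First I would rewrite, using the explicit formula obtained in Theorem \ref{KerFctExplicit},
\begin{equation*}
\SHyperBTransR\varphi(q) = \int_{0}^{+\infty}\SHyperBKernelFctR(t;q)\,\varphi(t)\,\frac{t^{\alphaa}e^{-t}}{\Gamma(\alphaa+1)}\,dt,
\end{equation*}
so that the transform is nothing but integration of $\varphi$ against $\SHyperBKernelFctR(\cdot;q)$ with respect to the reference measure $t^{\alphaa}e^{-t}/\Gamma(\alphaa+1)\,dt$. Next, using the submultiplicativity $|pq|\leqslant |p||q|$ of the quaternionic modulus together with the classical Cauchy–Schwarz inequality applied to $|\SHyperBKernelFctR(\cdot;q)|$ and $|\varphi|$, one obtains the pointwise estimate
\begin{equation*}
|\SHyperBTransR\varphi(q)| \;\leqslant\; \norm{\SHyperBKernelFctR(\cdot;q)}_{\alphaa,\R^+}\norm{\varphi}_{\alphaa,\R^+}.
\end{equation*}

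It remains only to know that the first factor is finite for every $q\in\BR$, and this is exactly the content of Corollary \ref{SliceKernelFct}: the function $t\longmapsto\SHyperBKernelFctR(t;q)$ belongs to $L^{2,\alpha}(\R^+,\Hq)$. Hence the defining integral converges absolutely and $\SHyperBTransR\varphi(q)$ is well-defined for every $q\in\BR$, which is the claim of the lemma.

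There is no serious obstacle here; the only thing one must be careful about is the noncommutativity, which forces one to keep the order $\SHyperBKernelFctR(t;q)\varphi(t)$ inside the integral (as written in \eqref{IntTransf}) and to invoke Cauchy–Schwarz on the real-valued moduli rather than through a bilinear manipulation. The explicit norm of $\SHyperBKernelFctR(\cdot;q)$ can, if desired, be read off the expansion $\SHyperBKernelFctR(t;q)=\sum_{n\geqslant 0}(q/R)^{n}L_{n}^{(\alphaa)}(t)$ combined with the orthogonality relation \eqref{OrthLaguerre}, yielding a quantitative version of the bound above and therefore also continuity of $\SHyperBTransR$ with respect to $\varphi$.
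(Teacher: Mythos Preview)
Your proof is correct and follows essentially the same approach as the paper: rewrite $\SHyperBTransR\varphi(q)$ as the integral of $\SHyperBKernelFctR(t;q)\varphi(t)$ against the weight, apply the Cauchy--Schwarz inequality to the moduli, and invoke the fact (Corollary~\ref{SliceKernelFct}, itself a consequence of Theorem~\ref{thm:Kernel-Kernel}) that $t\mapsto\SHyperBKernelFctR(t;q)$ lies in $L^{2,\alpha}(\R^+,\Hq)$. Your additional remarks on noncommutativity and on reading off the explicit norm from the Laguerre expansion are accurate but not needed for the lemma itself.
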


 \begin{proof}
 This can be handled easily making use of the Cauchy-Schwarz inequality. Indeed, we have
\begin{align*}
|\SHyperBTransR \varphi(q)| &\leqslant \int_{0}^{\infty} |\SHyperBKernelFctR(t;q)| |\varphi(t)| \frac{t^\alphaa  e^{-t}}{\Gamma(\alphaa +1)} dt
\\&\leqslant \left( \int_{0}^{\infty} |\SHyperBKernelFctR(t;q)|^2\frac{t^\alphaa  e^{-t}}{\Gamma(\alphaa +1)} dt \right)^{\frac{1}{2}} \left( \int_{0}^{\infty}|\varphi(t)|^2 \frac{t^\alphaa  e^{-t}}{\Gamma(\alphaa +1)} dt \right) ^{\frac{1}{2}}.
\end{align*}
Now, since $t \longmapsto \SHyperBKernelFctR(t;q)$ belongs to $L^{2,\alpha}(\R^+,\Hq) $ for every fixed $q\in \Hq$ (see Theorem \ref{thm:Kernel-Kernel}), we deduce
\begin{align*}
|\SHyperBTransR \varphi(q)|
&\leqslant \norm{ \SHyperBKernelFctR(\cdot;q)}_{L^{2,\alpha}(\R^+,\Hq)}\norm{\varphi}_{L^{2,\alpha}(\R^+,\Hq)}.
\end{align*}
\end{proof}
Moreover, we can prove the following

\begin{theorem}\label{MainThm1}
The integral operator $\SHyperBTransR $
defines a unitary isometry transform from $L^{2,\alpha}(\R^+,\Hq)$ onto $\BergSliceR.$ Moreover, we have
$\SHyperBTransR \phi_{n}(q) = f_n(q),$ where $\phi_{n}(x)$ and $f_n(q)$ are respectively the orthogonal bases of $L^{2,\alpha}(\R^+,\Hq)$ and the slice hyperholomorphic Bergman space $\BergSliceR$  given by \eqref{onbR} and \eqref{onbA}, respectively.
\end{theorem}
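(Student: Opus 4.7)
The plan is to verify $\SHyperBTransR \phi_n = f_n$ on the orthonormal basis $\{\phi_n\}$ of $L^{2,\alpha}(\R^+,\Hq)$, then extend by right $\Hq$-linearity and a Parseval-type identity to obtain the isometric and surjective character of $\SHyperBTransR$. After absorbing the prefactor $(1-q/R)^{-\alpha-1}$ inside the integral (cf.\ Theorem \ref{KerFctExplicit}), the operator takes the symmetric form
$$\SHyperBTransR \varphi(q) = \int_0^{+\infty} \SHyperBKernelFctR(t;q)\, \varphi(t)\, \frac{t^\alpha e^{-t}}{\Gamma(\alpha+1)}\, dt,$$
so the preceding lemma expressing $\SHyperBKernelFctR(t;q) = \sum_m \phi_m(t) f_m(q)$ is directly applicable.

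For the computation of $\SHyperBTransR \phi_n(q)$, I would substitute this expansion, use that $\phi_m(t) \in \R$ commutes with the quaternion value $f_m(q)$, and interchange sum and integral to obtain
$$\SHyperBTransR \phi_n(q) = \sum_{m=0}^\infty f_m(q)\, \scal{\phi_m,\phi_n}_{\alpha,\R^+} = f_n(q)$$
by orthonormality of $\{\phi_n\}$. Writing a general $\varphi = \sum_n \phi_n a_n$ with $a_n \in \Hq$ and $\|\varphi\|_{L^{2,\alpha}(\R^+,\Hq)}^2 = \sum_n |a_n|^2$, and using that $\SHyperBTransR$ is right $\Hq$-linear (since $\varphi(t)$ sits to the right of the kernel in the integrand), I would conclude $\SHyperBTransR \varphi = \sum_n f_n a_n$, whence
$$\|\SHyperBTransR \varphi\|_{\alpha,slice}^2 = \sum_n |a_n|^2 \|f_n\|_{\alpha,slice}^2 = \sum_n |a_n|^2 = \|\varphi\|_{L^{2,\alpha}(\R^+,\Hq)}^2$$
by orthonormality of $\{f_n\}$ (Proposition \ref{prop:normBergman}). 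Surjectivity is then immediate from Corollary \ref{cor:growthCond}: any $g = \sum_n f_n a_n \in \BergSliceR$ is the image of $\varphi := \sum_n \phi_n a_n \in L^{2,\alpha}(\R^+,\Hq)$.

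The main technical hurdle is the justification of the sum-integral exchange in the computation of $\SHyperBTransR \phi_n$. The cleanest route uses Corollary \ref{SliceKernelFct}: for every fixed $q \in \BR$, the kernel $\SHyperBKernelFctR(\cdot;q)$ belongs to $L^{2,\alpha}(\R^+,\Hq)$, and its Fourier--Laguerre expansion there is precisely $\sum_m \phi_m(\cdot) f_m(q)$; the required swap then reduces to the $L^2$-continuity of the inner product $\scal{\cdot,\phi_n}_{\alpha,\R^+}$ applied to the partial sums. Once this single exchange is secured, the rest of the argument is a purely algebraic unwinding through orthonormal bases on each side.
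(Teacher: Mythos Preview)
Your proof is correct and follows a genuinely different route from the paper's. The paper fixes $I\in\Sq$, splits $\varphi=F+GJ$ with $F,G:\R^+\to\C_I$ and $J\perp I$, and observes that $(\SHyperBTransR\varphi)_I=\HolBTransR F+(\HolBTransR G)J$; the isometry and surjectivity are then inherited directly from the classical complex transform $\HolBTransR$ on $\BergHolRI$, which is taken as known. Your argument instead works intrinsically in the quaternionic setting via the orthonormal bases $\{\phi_n\}$ and $\{f_n\}$ and the kernel expansion $\SHyperBKernelFctR(t;q)=\sum_m\phi_m(t)f_m(q)$, never invoking the complex result. What your approach buys is self-containment: it does not appeal to Bargmann's classical theorem as a black box, and it makes the correspondence $\phi_n\mapsto f_n$ the organizing principle rather than a by-product. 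What the paper's approach buys is economy and alignment with the philosophy of the slice theory---reduce to a slice and lift---so that nothing needs to be recomputed. One small point worth making explicit in your write-up: the passage from $\SHyperBTransR\phi_n=f_n$ to $\SHyperBTransR\varphi=\sum_n f_n a_n$ for general $\varphi=\sum_n\phi_n a_n$ uses the pointwise continuity $|\SHyperBTransR\psi(q)|\leq\|\SHyperBKernelFctR(\cdot;q)\|\,\|\psi\|$ (Lemma~4.2 / Corollary~\ref{SliceKernelFct}) applied to the partial sums, which you have available but did not cite at that step.
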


\begin{proof}
Fix $I\in \Sq$. The identification of the slice $\C_I$ with the complex plane $\C$ and $\BRI$ with the disc $D_R=D(0,R)$ of $\C$ leads to the consideration of the unitary isometry $\HolBTransR$ from $L^{2,\alpha}(\R^+,\C_I)$ onto $\BergHolRI$. It is specified by the rescaled version of the formula \eqref{sBT}, to wit
\begin{align*}
\HolBTransR \varphi(z) = \left(1-\frac{z}{R}\right)^{-\alpha-1} \int_0^{+\infty}  \varphi(t) \exp \left( \frac{tz}{z-R}\right)
 \frac{t^{\alpha}e^{-t}}{\Gamma (\alpha+1)}   dt
\end{align*}
for $z,w\in \BRI$. Now, for $J\in \Sq$ such that $J\perp I$, we split any $\varphi\in L^{2,\alpha}(\R^+,\Hq)$ as $\varphi_I = F + G J$ for some $F,G: \R^+_I \longrightarrow \C_I$. Obviously, we have $\R^+_I = \R^+$, $\varphi_I=\varphi$ as well as
$$ (\SHyperBTransR \varphi)_I = \SHyperBTransR F + (\SHyperBTransR G) J = \HolBTransR F + (\HolBTransR G) J$$
by means of \eqref{IntTransf} and therefore $\SHyperBTransR \phi_{n}(q) = f_n(q)$.
Moreover, it is evident to see that $F,G \in L^{2,\alpha}(\R^+,\C_I)$ with
\begin{align*}
 \norm{\SHyperBTransR \varphi}_{\alpha,slice}^2  &= \norm{ \HolBTrans F}_{\alpha,slice}^2 + \norm{ \HolBTrans G}_{\alpha,slice}^2\\
&= \norm{F}_{L^{2,\alpha}(\R^+,\C_I)}^2 + \norm{G}_{L^{2,\alpha}(\R^+,\C_I)}^2\\
&=\norm{\varphi}_{L^{2,\alpha}(\R^+,\Hq)}^2 .
\end{align*}
Accordingly, the transform  $\SHyperBTransR$ from $L^{2,\alpha}(\R^+,\Hq)$ into $\BergSliceR$ is injective and an isometry.
On the other hand, since $\HolBTransR$ is surjective, we see that so is $\SHyperBTransR$. Therefore, $\SHyperBTransR$ is a unitary isometry from $L^{2,\alpha}(\R^+,\Hq)$ onto $\BergSliceR$.
\end{proof}

\begin{remark}
The argument of splitting $\varphi$ as $\varphi_I = F + G J$ for some $F,G: \R^+_I \longrightarrow \C_I$ with $J\in \Sq$ and that $J\perp I$ is used in \cite{RenWang2016} and is the basic idea in the splitting Lemma \ref{split}. The result of Theorem \ref{MainThm1} can also be proved by using \eqref{KerFct2} in order to rewrite the integral transform $\SHyperBTransR$ acting on $L^{2,\alpha}(\R^+,\Hq)$, in \eqref{IntTransf}, as
\begin{align*}
\SHyperBTransR  \varphi(q)
= \frac{1 - I_{q}J}{2}
& \int_{0}^{+\infty}\HolKernelFctR(t;z_{q})  \varphi(t) \frac{t^\alphaa  e^{-t}}{\Gamma(\alphaa +1)} dt
\\& +  \frac{1 + I_{q}J}{2}\int_{0}^{+\infty}\HolKernelFctR(t;\overline{z_{q}})  \varphi(t) \frac{t^\alphaa  e^{-t}}{\Gamma(\alphaa +1)} dt.
\end{align*}
\end{remark}

The second main result of this section is the following

\begin{theorem}\label{MainThm2}
The inverse transform $ [\SHyperBTransR]^{-1}: \BergSliceR\longrightarrow L^{2,\alpha}(\R^+,\Hq)$ of $\SHyperBTransR$ is given by
\begin{align}\label{Inverse2BTq}
[\SHyperBTransR]^{-1} f(t) =  \left(\frac{\alpha}{\pi R^2}\right)  \int_{\BRI}
 \exp \left( \frac{t\overline{q}}{\overline{q}-R}\right) \frac{\left(1- \frac{|q|^2}{R^2} \right)^{\alpha-1}}{ \left(1-\frac{\overline{q}}{R}\right)^{\alphaa+1}}   f(q) dxdy.
\end{align}
\end{theorem}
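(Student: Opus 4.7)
The plan is to exploit the unitarity of $\SHyperBTransR$ proved in Theorem \ref{MainThm1}: since $\SHyperBTransR \phi_n = f_n$, it suffices to check that the integral operator $T$ defined by the right-hand side of \eqref{Inverse2BTq} sends each basis element $f_n$ to $\phi_n$, and then to conclude by boundedness and density. Observe that on a fixed slice $\BRI$ the variables $q,\overline{q}\in\C_I$ commute, and the generating function of the Laguerre polynomials yields
\begin{align*}
\exp\!\left(\frac{t\overline{q}}{\overline{q}-R}\right)\left(1-\frac{\overline{q}}{R}\right)^{-\alphaa-1}
= \sum_{m=0}^{\infty}\left(\frac{\overline{q}}{R}\right)^{m} L^{(\alphaa)}_{m}(t),
\end{align*}
uniformly on compact subsets of $\BRI$. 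Up to real normalizing constants, this is precisely the slicewise conjugate of the kernel $\SHyperBKernelFctR(t;q)$ of Theorem \ref{KerFctExplicit}.

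Substituting this expansion into $Tf_n(t)$ with $f_n(q)=c_n(q/R)^n$, where $c_n=\bigl(\Gamma(n+\alphaa+1)/(n!\,\Gamma(\alphaa+1))\bigr)^{1/2}$, and interchanging the series with the integral (justified by uniform convergence on compact subsets together with a Cauchy--Schwarz/dominated-convergence estimate near $\partial\BRI$, exactly as in the well-posedness proof of $\SHyperBTransR$) leaves
\begin{align*}
Tf_n(t)=c_n\sum_{m=0}^{\infty}L^{(\alphaa)}_{m}(t)\left(\frac{\alphaa}{\pi R^{2}}\right)\int_{\BRI}\left(\frac{\overline{q}}{R}\right)^{m}\left(\frac{q}{R}\right)^{n}\left(1-\frac{|q|^{2}}{R^{2}}\right)^{\alphaa-1}dxdy.
\end{align*}
The orthogonality of monomials established in Proposition \ref{prop:normBergman} collapses the series to the single term $m=n$, and an elementary arithmetic with the normalization constants of \eqref{onbR} and \eqref{onbA} gives $Tf_n(t)=\phi_n(t)$. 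This is the central computational step.

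It remains to promote the basis identity into the full inverse formula. The cleanest route is to note that the kernel in \eqref{Inverse2BTq} coincides with $\overline{\SHyperBKernelFctR(t;q)}$ on each slice by Theorem \ref{KerFctExplicit}, so $T$ is formally the adjoint of $\SHyperBTransR$ and inherits boundedness from its unitarity. Alternatively, one can repeat the Cauchy--Schwarz argument used just before Theorem \ref{MainThm1}, invoking Corollary \ref{SliceKernelFct}. For a general $f\in\BergSliceR$ one then decomposes $f_I=F+GJ$ on a fixed slice with $J\perp I$ via the splitting Lemma \ref{split}, reducing the computation to the classical complex inverse second Bargmann transform and using that right multiplication by the quaternionic constant $J$ commutes with the integration. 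Since $T$ and $[\SHyperBTransR]^{-1}$ are bounded right $\Hq$-linear operators that agree on the orthonormal basis $\{f_n\}$, they agree on the whole of $\BergSliceR$.

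The main obstacle is the noncommutativity of $\Hq$, which forces careful bookkeeping of the order of multiplications between the real Laguerre coefficients, the slicewise complex variables $q,\overline{q}\in\C_I$, and the genuinely quaternionic Fourier coefficients of $f$. The two devices used above---working on a single slice $\BRI$ and splitting $f_I=F+GJ$---are precisely the tools already employed in the proof of Theorem \ref{MainThm1}, so once the splitting is performed the argument reduces essentially to the classical complex setting.
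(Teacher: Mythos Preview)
Your proof is correct. The paper's own argument is much terser and proceeds differently: it simply observes that since $\SHyperBTransR$ is unitary (Theorem \ref{MainThm1}), its inverse is its adjoint, writes down the adjoint kernel $\overline{\SHyperBKernelFctR(t;q)}$ in the complex slice case to obtain the formula \eqref{Inverse2BTc} for $[\HolBTransR]^{-1}$, and then invokes the splitting lemma to transport this formula to the quaternionic space. No computation on basis elements is performed.

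Your route instead makes the basis identity $Tf_n=\phi_n$ the centrepiece, via the Laguerre generating function and the monomial orthogonality of Proposition \ref{prop:normBergman}, and only afterwards appeals to the adjoint/splitting machinery to establish boundedness and close the density argument. This is a genuinely more hands-on approach: it gives an explicit, self-contained verification that the proposed integral formula really does invert $\SHyperBTransR$ on a total set, whereas the paper's argument relies entirely on the abstract fact that inverse equals adjoint for a unitary. The trade-off is that your proof is longer and requires some care with the series--integral interchange, while the paper's is shorter but leaves the reader to check that conjugating the kernel really produces the stated formula. Since you already mention the adjoint observation as ``the cleanest route,'' you have in fact recovered the paper's argument as a sub-step; the basis computation you lead with is a correct but logically redundant reinforcement.
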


\begin{proof}
The inverse of the unitary isometric transform $\HolBTransR$ in \eqref{IntTransf} is given by its hermitian conjugate (the adjoint).
More explicitly, we claim that
\begin{align}\label{Inverse2BTc}
[\HolBTransR]^{-1} F(t) = \left(\frac{\alphaa}{\pi R^2}\right)  \int_{\BRI}
F(z) \exp \left( \frac{t\overline{z}}{\overline{z}-R}\right)  \frac{\left(1- \frac{|z|^2}{R^2}\right)^{\alpha-1}}{(1-\frac{\overline{z}}{R})^{\alpha+1}}  dxdy
\end{align}
for every $F\in \BergHolRI $. Consequently, the inverse transform of $\SHyperBTransR$ in \eqref{IntTransf}, the quaternionic analogue of $\HolBTransR$, is given by \eqref{Inverse2BTq}.
This follows readily from the splitting Lemma \ref{split} combined with \eqref{Inverse2BTc} above.
 \end{proof}

\section{\bf Asymptotic behavior: from slice Bergman to slice Bargmann}

Intuitively, the space $\Hq$ can be viewed as the euclidean limit of balls $\BR$ in $\Hq$, as the radius $R$ goes to $+\infty$.
This intuitive limit can be justified geometrically.
The analogue of the Poincar\'e (the real hyperbolic) differential metric on the unit open ball $\B$ in the quaternionic setting
is given by $ ds^2_{\B} = (1-|q|^2)^{-2}|d_Iq|^2 $, where $q= x+Iy$; $I\in \Sq$, and $d_Iq=dx+Idy$. It was proposed by Bisi and Gentili in \cite{BisiGentili2009} by developing a variation of an approach adopted by Ahlfors \cite{Ahlfors1988}.
The quaternionic hyperbolic geometry on $\BR$ is described by the scaled Poincar\'e-type differential metric
$$ ds^2_{\BR} = R^4 (R ^2-|q|^2)^{-2} |d_Iq|^2.  $$
The associated volume measure is given by
$$d\lambda_{I,R}^{\alpha}(q=x+Iy) =\left(\frac{\alpha}{\pi R^2}\right)\left(1-\frac{|q|^2}{R^2}\right)^{\alpha-1} dxdy .$$
Therefore, the sectional curvature of $(\BR,ds^2_{\BR})$, given by $\kappa_R= - 4/R^2$, tends to $0$ which corresponds to the curvature of the flat hermitian manifold $(\Hq,ds^2_{\Hq})$ endowed with the flat metric $ ds^2_{\Hq} = |d_Iq|^2 $.
Moreover, if we parameterize $\alphaa$ as $\alphaa = \nu R^2$ for some fixed $\nu>0$, we see that the sliced measure $d\lambda_{I,R}^{\alpha}$
converges pointwisely to the sliced volume measure on $\Hq$,
$$ \displaystyle \lim\limits_{R\rightarrow +\infty} d\lambda_{I,R}^{\nu R^2}(q=x+Iy) = \left(\frac{\nu}{\pi}\right) \lim_{R\rightarrow +\infty} \left( 1- \left|\dfrac{q}R\right|^2\right) ^{\nu R^2}  dxdy  = \left(\frac{\nu}{\pi}\right) e^{-\nu |q|^{2}}  dxdy .$$

With respect to this parametrization, the orthonormal basis of the slice hyperholomorphic Bergman space $\BergSliceR$ given by the functions
 \begin{equation*}
f_n(q) =  \left(\frac{\Gamma(n+\nu R^2+1)}{n!\Gamma(\nu R^2+1)}\right)^{1/2} \left(\frac{q}{R} \right) ^{n},
\end{equation*}
 (see Proposition \ref{prop:normBergman}), also gives rise to
  \begin{equation*}
e_n(q) =  \left(\frac{\nu^n }{n!}\right)^{1/2} q^{n},
\end{equation*}
pointwisely, when $R$ goes to infinity. The set of $e_n(q)$ is in fact an orthonormal basis of the slice hyperholomorphic Bargmann-Fock space
\begin{equation}\label{SliceBargSp}
 \mathcal{F}^{2,\nu}_{slice}(\Hq) = \mathcal{SR}(\Hq) \cap L^{2,\nu}(\C_I,\Hq )
 \end{equation} 
 with respect to the sliced Gaussian measure $\left(\frac{\nu}{\pi}\right) e^{-\nu |q|^{2}}  dxdy $. This follows readily thanks to the Binet formula \cite{Rainville71}
$$ \lim_{x \to +\infty} \frac{\Gamma(x+a)}{x^{a-b}\Gamma(x+b)} =  1 .$$
The main result of this section concerns the pointwise convergence of the reproducing kernel function.

\begin{theorem}\label{MainThm3:limKernel}
The pointwise limit of the weighted Bergman kernel $K^{\nu R^2}_{R,slice}$ of the slice hyperholomorphic Bergman space $A^{2,\nu R^2}_{slice}(\BR)$
is exactly the reproducing kernel of the slice hyperholomorphic Bargmann-Fock space $ \mathcal{F}^{2,\nu}_{slice}(\Hq) $ in \eqref{SliceBargSp}.
More precisely, for every fixed $(q,p) \in \Hq\times \Hq$, we have
$$ \lim_{R\rightarrow +\infty}  K^{\nu R^2}_{R,slice}(q, p) = e_{*}^{[\nu q,\overline{p}]} :=\sum_{n=0}^{+\infty}\dfrac{\nu^n q^{n}\overline{p}^{n}}{n!}.$$
\end{theorem}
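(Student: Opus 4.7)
The plan is to exploit the power-series expansion of the reproducing kernel afforded by Proposition \ref{prop:normBergman}, pass to the limit term-by-term, and justify the interchange of limit and summation by a dominated-convergence argument.

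First, by \eqref{RepKerExpansian} applied to the orthonormal basis $\{f_n\}$ of \eqref{onbA}, the Bergman kernel admits the expansion
$$\Kslice(q,p) = \sum_{n=0}^{\infty} f_n(q)\overline{f_n(p)} = \sum_{n=0}^{\infty} \frac{(\alpha+1)_n}{n!}\,\frac{q^n\overline{p}^n}{R^{2n}},$$
since the normalising constants of the $f_n$ are real-valued and $(\alpha+1)_n = \Gamma(n+\alpha+1)/\Gamma(\alpha+1)$. Specialising to $\alpha = \nu R^2$, the $n$-th coefficient factorises as
$$\frac{(\nu R^2+1)_n}{R^{2n}} = \prod_{k=1}^{n}\left(\nu + \frac{k}{R^2}\right),$$
which decreases monotonically to $\nu^{n}$ as $R\to+\infty$ (this limit also follows from the Binet formula recalled in the text). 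Termwise, the general coefficient therefore tends to $\nu^{n} q^n \overline{p}^n/n!$, the desired contribution.

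The main technical step, and the principal obstacle, is the interchange of limit and summation: a naive uniform bound fails because the Pochhammer factor $(\nu R^2+1)_n$ itself grows with $R$. The decisive observation is that each factor $\nu + k/R^2$ is decreasing in $R$. Hence, fixing $(q,p)\in \Hq\times \Hq$ and choosing $R_{0}>2\max(|q|,|p|,1)$, the monotonicity just noted together with the multiplicativity of the quaternionic modulus yield, for every $R\geqslant R_{0}$ and every $n\geqslant 0$,
$$\left|\frac{(\nu R^2+1)_n}{n!}\,\frac{q^n\overline{p}^n}{R^{2n}}\right| \;\leqslant\; \frac{(|q||p|)^n}{n!}\prod_{k=1}^{n}\left(\nu+\frac{k}{R_{0}^{2}}\right) \;=\; \frac{(\nu R_{0}^{2}+1)_n}{n!}\left(\frac{|q||p|}{R_{0}^{2}}\right)^{n}.$$
Since $|q||p|/R_{0}^{2}<1/4$, the right-hand side is summable as the classical binomial expansion of $(1-|q||p|/R_{0}^{2})^{-\nu R_{0}^{2}-1}$. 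Dominated convergence for series then delivers
$$\lim_{R\to+\infty}K^{\nu R^2}_{R,slice}(q,p) = \sum_{n=0}^{\infty}\frac{\nu^{n}\, q^n\overline{p}^n}{n!},$$
as claimed.
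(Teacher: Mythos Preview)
Your proof is correct. Both you and the paper start from the same power-series expansion of the kernel, $K^{\nu R^2}_{R,slice}(q,p)=\sum_{n\ge 0}\frac{(\nu R^2+1)_n}{n!}\,\frac{q^n\overline{p}^n}{R^{2n}}$ (this is \eqref{explicitWBK2}, equivalently \eqref{RepKerExpansian}), and both identify the termwise limit as $\nu^n q^n\overline{p}^n/n!$. The difference lies in how the interchange of limit and summation is justified: the paper packages this step as the Kummer-type asymptotic \eqref{limitHyepergeometric} for the quaternionic ${_2F^{*}_1}$ and defers its proof to the reference \cite{GhIn2005JMP} (extended via the identity principle), whereas you give a fully self-contained dominated-convergence argument based on the monotonicity of $\prod_{k=1}^{n}(\nu+k/R^2)$ in $R$. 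Your route is more elementary and avoids the external citation; the paper's route, by quoting the general hypergeometric limit, immediately yields the additional uniform convergence on compact subsets of $\Hq\times\Hq$ noted in the remark following the theorem (though your estimate, being uniform for $|q|,|p|<R_0/2$, recovers this as well).
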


\begin{proof}
Recall first that for $\alphaa=\nu R^2$ and $R$ being reals, the expression of the reproducing kernel $\Kslice$ given by \eqref{explicitWBK2} reads
\begin{align*}
\Kslice(q, p)=  {_2{F^{*}}_1}\left(   \begin{array}{c} \nu R^2+1, 0 \\ 0 \end{array} \bigg | \left[\frac{q}{R^2} , \overline{p}\right] \right).
\end{align*}
 Accordingly, what is needed to conclude is an asymptotic behavior of the involved hypergeometric function.
Thus, we claim that for every fixed $q,p\in \Hq$ and reals $a,b,c$, we have (see \cite{GhIn2005JMP} for a rigorous proof that we can extend to our context by means of the identity principle for slice regular functions)
\begin{align}\label{limitHyepergeometric}
 \lim\limits_{\rho \longrightarrow + \infty}
 {_2{F^{*}}_1}\left( \begin{array}{c} a + \rho  , b \\ c \end{array}\bigg | \left[\frac{q}{\rho},\overline{p}\right] \right)=
 {_1{F^*}_1}\left( \begin{array}{c} b \\ c \end{array}\bigg | \left[q,\overline{p}\right] \right).
\end{align}
Moreover, the convergence is uniform on compact sets of $\Hq\times \Hq$. Therefore, one obtains
$$ \lim_{R\rightarrow +\infty}  K^{\nu R^2}_{R,slice}(q, p) =
= \sum_{n=0}^{+\infty}\dfrac{\nu^n q^{n}\overline{p}^{n}}{n!} = e_{*}^{[\nu q,\overline{p}]} .$$
\end{proof}

  \begin{remark}
 According to the uniform convergence of the series in \eqref{limitHyepergeometric} on compact sets of $\Hq\times \Hq$, the convergence in Theorem  \ref{MainThm3:limKernel} of the reproducing kernel function is uniform in $(q,p)$ in any compact set of $\Hq\times \Hq$.
\end{remark}

\section{Appendix}

The suggested expression of the weighted Bergman kernel $\Kslice(q, p)$ in \eqref{explicitWBK} can be handled as follows (which is in fact a different proof of Theorem \ref{thm:explicitWBK}).
 Fix $p\in \BR $ and let $q=x+Iy\in \BR $. Take $z_p$ to be $z_p=x+I_py$. Then, by the representation formula and the explicit expression of the classical weighted Bergman kernel given through \eqref{expKhol}, we obtain
\begin{align*}
\Kslice(q, p) &= \Kslice(x+Iy, p)
\\&= \frac{1}{2} \left( \Kslice(z_p, p) + \Kslice(\overline{z_p}, p)\right) + \frac{II_p}{2} \left( \Kslice(\overline{z_p}, p) - \Kslice(z_p, p) \right)\\
&= \frac{1}{2} \left( \frac{\left( 1 - \frac{z_p\overline{p}}{R^2} \right)^{\alphaa+1} + \left( 1 - \frac{\overline{z_p}\overline{p}}{R^2} \right)^{\alphaa+1}}{\left( 1 - \frac{z_p\overline{p}}{R^2} \right)^{\alphaa+1}  \left( 1 - \frac{\overline{z_p}\overline{p}}{R^2} \right)^{\alphaa+1}} \right)
 + \frac{II_p}{2} \left( \frac{\left( 1 - \frac{z_p\overline{p}}{R^2} \right)^{\alphaa+1} - \left( 1 - \frac{\overline{z_p}\overline{p}}{R^2} \right)^{\alphaa+1}}{\left( 1 - \frac{z_p\overline{p}}{R^2} \right)^{\alphaa+1}  \left( 1 - \frac{\overline{z_p}\overline{p}}{R^2} \right)^{\alphaa+1}} \right).
\end{align*}
Straightforward computation shows that
$$\left( 1 - \frac{z_p\overline{p}}{R^2} \right)^{\alphaa+1}  \left( 1 - \frac{\overline{z_p}\overline{p}}{R^2} \right)^{\alphaa+1} =
\left( 1 - 2 \frac{\Re(q) \overline{p} }{R^2} +  \frac{|q|^2\overline{p}^2}{R^4} \right)^{\alphaa+1}$$
and
$$\left( 1 - \frac{z_p\overline{p}}{R^2} \right)^{\alphaa+1}  \pm \left( 1 - \frac{\overline{z_p}\overline{p}}{R^2} \right)^{\alpha+1} =
\sum_{k=0}^\infty \frac{(-\alpha-1)_k}{k!} (z_p^k \pm \overline{z_p}^k) \left(\frac{\overline{p}}{R^2}\right)^k.$$
The equality follows using the binomial theorem for real exponent
$$ (a - b)^\beta = \sum_{k=0}^\infty \frac{(-\beta)_k}{k!} a^{\beta-k} b^k$$
valid for $|a| > |b|$.
Consequently, the expression of $\Kslice(q, p)$ becomes
\begin{align*}
\Kslice(q, p)
&=  \left(\sum_{k=0}^\infty   \frac{(-\alpha-1)_k}{k!}\frac{1}{2} \left[ (z_p^k + \overline{z_p}^k) + I I_p (z_p^k - \overline{z_p}^k) \right]\left(\frac{\overline{p}}{R^2}\right)^k \right)\\
& \qquad \times
\left( 1 - 2 \frac{\Re(q) \overline{p} }{R^2} +  \frac{|q|^2\overline{p}^2}{R^4} \right)^{-\alpha-1}.
\end{align*}
Now, making use of the well-established fact
$$\overline{q}^{k}=(x-Iy)^{k}=\frac{1}{2}\left( ((x+Jy)^{k}+(x-Jy)^{k})  + IJ((x+Jy)^{k} - (x-Jy)^{k}) \right),$$
for every nonnegative integer $k$ and arbitrary $q\in\Hq$ and $J\in\Sq$, we conclude easily that
\begin{align*}
\Kslice(q, p) &=  \left(\sum_{k=0}^\infty    \frac{ (-\alphaa-1)_k  }{k!} \frac{ \overline{q}^k \overline{p}^k}{R^{2k}} \right)
\left( 1 - 2 \frac{\Re(q) \overline{p} }{R^2} +  \frac{|q|^2\overline{p}^2}{R^4} \right)^{-\alphaa-1}.
\end{align*}

{\bf\it Acknowledgements.}
The authors acknowledge their indebtedness to the anonymous referees for helpful comments and for brought to our attention the recent work  \cite{RenWang2016}.
 A part of the present investigation was completed while the second author visited to departimenti di Mathematica at the Politecnico di Milano for May - June 2017.  He wishes to express his gratitude to Professor I.M. Sabadini for hospitality and many interesting stimulating discussions and interactions.
This research work was supported, in part, by a grant from the Simons Foundation. 


\begin{thebibliography}{9}
\bibitem{Ahlfors1988} Ahlfors L.V.,
        Cross-ratios and Schwarzian derivatives in $\R^n$.
        Complex Analysis, 
        1-15, Birkh\"auser, Basel, 1988.
\bibitem{AlpayBolotnikovColomboSabadini2016}  Alpay D., Bolotnikov V.,  Colombo F., Sabadini I.,
        Interpolation problems for certain classes of slice hyperholomorphic functions.
        Integr. Equ. Oper. Theory, 86 (2016), no. 2, 165--183
\bibitem{AlpayColomboSabadini2012}  Alpay D., Colombo F., Sabadini I.,
        Schur functions and their realizations in the slice hyperholomorphic setting.
        Integr. Equ. Oper. Theory,  72 (2012), no. 2, 253--289.
\bibitem{AlpayColomboSabadini2013} Alpay D., Colombo F., Sabadini I.,
        Pontryagin-de Branges-Rovnyak spaces of slice hyperholomorphic functions.
        J. Anal. Math., 121 (2013) 87--125.
\bibitem{AlpayColomboSabadini2014} Alpay D., Colombo F., Sabadini I., Salomon G.,
       {  The Fock space in the slice hyperholomorphic Setting}.
       In Hypercomplex Analysis: new perspectives and applications, 43--59,
        Trends Math., Birkh\"auser-Springer, Cham, 2014
\bibitem{AlpayColomboSabadini2016}  Alpay D., Colombo F., Sabadini I.,
         Slice hyperholomorphic Schur analysis.
         Operator Theory: Advances and Applications, 256. Birkh\"auser-Springer, Cham, 2016.
\bibitem{Bargmann1961} Bargmann V., 
      {  On a Hilbert space of analytic functions and an associated integral transform}.
      Comm. Pure Appl. Math.  14  (1961) 187--214.
\bibitem{BisiGentili2009} Bisi C., Gentili G.,
         M\"obius transformations and the Poincar\'e distance in the quaternionic setting.
         Indiana Univ. Math. J. 58 (2009), no. 6, 2729-2764.
\bibitem{Colombo-Sabadini} Colombo F.,  González-Cervantes J.O.,  Luna-Elizarrarás M.E., Sabadini I., Shapiro M.,
             On two approaches to the Bergman theory for slice regular functions.
             Advances in hypercomplex analysis, 39--54, Springer INdAM Ser., 1, Springer, Milan, 2013.
\bibitem{CP} Colombo F., Gonzalez-Cervantes J.O., Sabadini I.,
         The $C$-property for slice regular functions and applications to the Bergman space.
          Complex Var. Elliptic Equ. 58 (2013), no. 10, 1355--1372.
\bibitem{ColomboGonzalez-CervantesSabadini2014} Colombo F., Gonzalez-Cervantes J.O., Sabadini I.,
         Further properties of the Bergman spaces of slice regular functions.
          Adv. Geom. 15 (2015), no. 4, 469--484.
\bibitem{ColomboSabadiniStruppa2011} Colombo F., Sabadini I., Struppa D.C.,
       {  Noncommutative functional calculus, theory and applications
         of slice hyperholomorphic functions}.
         Progr. Math., vol. 289. Birkh\"auser, Basel, 2011.
\bibitem{DG2017} Diki K., Ghanmi A.,
         {  A quaternionic analogue of the Segal-Bargmann transform}.
         Complex Anal. Oper. Theory 11 (2017), no. 2, 457--473.
\bibitem{EGIM2012} El Wassouli F., Ghanmi A., Intissar A., Mouayn Z.,
       Generalized second Bargmann transforms associated with the hyperbolic Landau levels in the Poincar\'e disk.
       Ann. Henri Poincar\'e 13 (2012), no. 3, 513--524.
 \bibitem{EssadiqGI2016} Essadiq A., Ghanmi A., Intissar A.,
         A $q$-analogue of the weighted Bergman space on the disk and associated second $q$-Bargmann transform
         J. Math. Anal. Appl. 443 (2016), no. 2, 1311--1322.
\bibitem{Folland1989} Folland G.B.,
           { Harmonic analysis in phase space}.
            Annals of Mathematics Studies, 122.
            Princeton University Press, Princeton, NJ; 1989.
\bibitem{GhIn2005JMP}  Ghanmi A., Intissar A.,
       {  Asymptotic of complex hyperbolic geometry and $L\sp 2$-spectral analysis of Landau-like Hamiltonians},
       J. Math. Phys. 46 (2005), no. 3, 032107, 26 pp.
\bibitem{GentiliStruppa07} Gentili G., Struppa D.C.,
        {  A new theory of regular functions of a quaternionic variable}.
        Adv. Math. 216 (2007), no. 1, 279--301.
\bibitem{GentiliStoppatoStruppa2013} Gentili G., Stoppato C., Struppa D.C.,
        {  Regular functions of a quaternionic variable}.
        Springer Monographs in Mathematics, 2013.
\bibitem{Hall2001} Hall B.C.,
        {  Bounds on the Segal-Bargmann transform of $L^p$-functions}.
        J. Fourier Anal. Appl.  7  (2001), no. 6, 553--569.
\bibitem{Rainville71}  Rainville E.D.,
        { Special functions},
        Chelsea Publishing Co., Bronx, N.Y., 1960.
\bibitem{RenWang2016} Ren G., Wang X.,
        { Slice regular composition operators},
        Complex Var. Elliptic Equ.. 61 (2016), no 5, 682--711
\bibitem{Zhu2012}  Zhu K.,
        Analysis on Fock spaces.
        Graduate Texts in Mathematics, 263. Springer, New York, 2012.
\end{thebibliography}
\end{document}